\title{Heteroscedasticity-aware residuals-based \\ contextual stochastic optimization}
\date{January 8, 2021}
\author[1]{Rohit Kannan}
\author[2]{G{\"u}zin Bayraksan}
\author[3]{James R. Luedtke}
\affil[1]{Wisconsin Institute for Discovery, University of Wisconsin-Madison, Madison, WI, USA. \protect\\ E-mail: rohitk@alum.mit.edu}
\affil[2]{Department of Integrated Systems Engineering, The Ohio State University, Columbus, OH, USA. \protect\\ E-mail: bayraksan.1@osu.edu}
\affil[3]{Department of Industrial \& Systems Engineering and Wisconsin Institute for Discovery, \protect\\ University of Wisconsin-Madison, Madison, WI, USA. E-mail: jim.luedtke@wisc.edu}
\newcommand{\tr}[1]{\ensuremath{{#1}^\text{T}}}
\newcommand{\uset}[2]{\ensuremath{\underset{#1}{#2}}}
\DeclarePairedDelimiter\abs{\lvert}{\rvert}%
\DeclarePairedDelimiter\norm{\lVert}{\rVert}%
\newcommand{\prob}[1]{\mathbb{P}\left\lbrace{#1}\right\rbrace}
\newcommand{\expect}[1]{\mathbb{E}\left[{#1}\right]}
\newcommand{\expectation}[2]{\mathbb{E}_{#1}\left[{#2}\right]}
\newcommand{\dev}[2]{\mathbb{D}\left({#1},{#2}\right)}
\newcommand{\convinprob}{\xrightarrow{p}}
\newcommand{\proj}[2]{\operatorname{proj}_{#1}(#2)}
\newcommand{\D}{\mathcal{D}}
\newcommand{\F}{\mathcal{F}}
\newcommand{\Q}{\mathcal{Q}}
\newcommand{\Linf}{L^{\infty}}
\newcommand{\R}{\mathbb{R}}
\newcommand{\hS}{\hat{S}}
\newcommand{\X}{\mathcal{X}}
\newcommand{\Y}{\mathcal{Y}}
\newcommand{\Z}{\mathcal{Z}}
\newcommand{\hf}{\hat{f}}
\newcommand{\hg}{\hat{g}}
\newcommand{\hQ}{\hat{Q}}
\newcommand{\heps}{\hat{\varepsilon}}
\newcommand{\teps}{\tilde{\varepsilon}}
\newcommand{\hth}{\hat{\theta}}
\newcommand{\sth}{\theta^*}
\newcommand{\hv}{\hat{v}}
\newcommand{\hz}{\hat{z}}
\newcommand{\hP}{\hat{\mathcal{P}}}
\newtheorem{theorem}{Theorem}[]
\newtheorem{lemma}[theorem]{Lemma}
\newtheorem{assumption}{Assumption}[]
\newtheorem{example}{Example}
\let\oldtheorem\theorem
\renewcommand{\theorem}{\oldtheorem\normalfont}
\let\oldlemma\lemma
\renewcommand{\lemma}{\oldlemma\normalfont}
\let\oldassumption\assumption
\renewcommand{\assumption}{\oldassumption\normalfont}
\let\oldremark\remark
\renewcommand{\remark}{\oldremark\normalfont}
\let\olddefinition\definition
\renewcommand{\definition}{\olddefinition\normalfont}
\let\oldcorollary\corollary
\renewcommand{\corollary}{\oldcorollary\normalfont}
\let\oldproposition\proposition
\renewcommand{\proposition}{\oldproposition\normalfont}
\let\oldexample\example
\renewcommand{\example}{\oldexample\normalfont}
\let\oldconjecture\conjecture
\renewcommand{\conjecture}{\oldconjecture\normalfont}
\theoremstyle{definition}
\newtheorem{assp}{Assumption}
\providecommand{\keywords}[1]{{\small \textbf{Key words:} #1}}
\begin{document}

\maketitle

\begin{abstract}
We explore generalizations of some integrated learning and optimization frameworks for data-driven contextual stochastic optimization that can adapt to heteroscedasticity.
We identify conditions on the stochastic program, data generation process, and the prediction setup under which these generalizations possess asymptotic and finite sample guarantees for a  class of stochastic programs, including two-stage stochastic mixed-integer programs with continuous recourse.
We verify that our assumptions hold for popular parametric and nonparametric regression methods.
\\[0.09in]
\keywords{Data-driven stochastic programming, distributionally robust optimization, covariates, regression, heteroscedasticity, convergence rate, large deviations}
\end{abstract}

\section{Introduction}

We study data-driven stochastic programming in the presence of covariate/contextual information and examine heteroscedastic cases.
Specifically, we consider the setting where we have a finite number of observations of the uncertain parameters~$Y$ within an optimization model along with simultaneous observations of random covariates~$X$.
Given a new random observation $X = x$, our goal is to solve the {\it conditional stochastic program}
\begin{alignat}{2}
\label{eqn:sp}
&\uset{z \in \Z}{\min} \: && \expect{c(z,Y) \mid X = x}. \tag{SP}
\end{alignat}
Here, $z$ denotes the decision vector, $\Z \subseteq \R^{d_z}$ is the feasible region, and $c: \R^{d_z} \times \R^{d_y} \to \overline{\R}$ is an extended real-valued function.
An example application of this framework is production planning under demand uncertainty~\cite{bertsimas2014predictive}, where products' demands ($Y$) can be predicted using covariates ($X$) such as historical demands, location, and web chatter before making decisions ($z$) on production and inventory levels.
Another application is grid scheduling under wind uncertainty~\cite{donti2017task}, where covariates ($X$) such as weather observations, seasonality, and location can be used to predict available wind power ($Y$) before creating generator schedules ($z$).
Heteroscedasticity arises, for instance, when the variability of product demands or wind power availability depends significantly on the location, seasonality, or other covariates.

\citet{kannan2020data,kannan2020residuals} consider data-driven approaches that integrate a machine learning prediction model within a sample average approximation (SAA) or distributionally robust optimization (DRO) setup to approximate the solution to the conditional stochastic program~\eqref{eqn:sp}; see also~\cite{ban2018dynamic,sen2018learning}.
They first fit a statistical/machine learning model to predict~$Y$ given~$X$ and use this model and its residuals to construct scenarios for $Y$ given $X = x$.
Then, they use these scenarios within an SAA or DRO framework to approximate the solution to~\eqref{eqn:sp}.
We refer the readers to~\cite[e.g.,][]{kannan2020data,kannan2020residuals,ban2018dynamic,sen2018learning,bertsimas2014predictive} for a review of other data-driven approximations to~\eqref{eqn:sp}.

The data-driven formulations in~\citet{kannan2020data,kannan2020residuals} assume that the dependence of the random vector~$Y$ on the random covariates~$X$ can be modeled as $Y = f^*(X) + \varepsilon$, where $f^*(x) := \expect{Y \mid X = x}$ is the regression function and~$\varepsilon$ are zero-mean errors.
These approaches crucially require the errors~$\varepsilon$ to be {\it independent} of the covariates~$X$.
Motivated by applications where such an assumption may fail to hold, we explore generalizations of these approaches that do not require this independence assumption.

\paragraph{Notation.}
Let $[n] := \{1,\dots,n\}$, $\norm{\cdot}$ denote the Euclidean or operator $\ell_2$-norm, $\proj{\mathcal{S}}{v}$ denote the orthogonal projection of $v$ onto a nonempty closed convex set~$\mathcal{S}$, $I$ denote an identity matrix of appropriate dimension, $\tr{v}$ denote the transpose of a vector $v$, and $A \succ 0$ denote that the matrix $A$ is positive definite. Let $\delta$ denote the Dirac measure.
For scalars $c_1, \dots, c_l$, we write $\text{diag}(c_1,\dots,c_l)$ to denote the $l \times l$ diagonal matrix with $i$th diagonal entry equal to $c_i$.
For sets $\mathcal{A}, \mathcal{B} \subseteq \R^{d_z}$, let $\dev{\mathcal{A}}{\mathcal{B}} := \sup_{v \in \mathcal{A}} \text{dist}(v,\mathcal{B})$ denote the deviation of~$\mathcal{A}$ from~$\mathcal{B}$, where $\text{dist}(v,\mathcal{B}) := \inf_{w \in \mathcal{B}} \norm{v - w}$.
The abbreviations `a.e.', `a.s.', `LLN', `i.i.d.', and `r.h.s.' are shorthand for `almost everywhere', `almost surely', `law of large numbers', `independent and identically distributed', and `right-hand side'. 
For a random vector~$V$ with probability measure~$P_V$, we write a.e.\ $v \in V$ to denote $P_V$-a.e.\ $v \in V$.
The symbols~$\xrightarrow{p}$ and $\xrightarrow{a.s.}$ denote convergence in probability and almost surely with respect to the probability measure generating the joint data on~$(Y,X)$.
For random sequences~$\{V_n\}$ and~$\{W_n\}$, we write $V_n = o_p(W_n)$ and $V_n = O_p(W_n)$ to  convey that~$V_n = R_n W_n$ with $\{R_n\}$ converging in probability to zero, or being bounded in probability, respectively.
We write $O(1)$ for generic constants.

\section{Heteroscedasticity-aware residuals-based approximations}

\subsection{Framework and approximations}

To handle heteroscedasticity, we assume that the random vector~$Y$ is related to the random covariates~$X$ as\footnote{We focus our attention on this popular model of heteroscedasticity even though our framework applies more generally, e.g., to relationships of the form $Y = m^*(X,\varepsilon)$ with the mapping $m^*(x,\cdot)$ being {\it invertible} for a.e.\ $x \in \X$ and satisfying some regularity conditions.} $Y = f^*(X) + Q^*(X)\varepsilon$, where $f^*$ denotes the regression function, $Q^*(X)$ is the square root of the conditional covariance matrix of the error term, and the zero-mean random errors~$\varepsilon$ are independent of the covariates~$X$.
This type of model is common in statistics; see, e.g.,~\cite{carroll1982robust,bauwens2006multivariate,zhou2018new,dalalyan2013learning}.
The functions $f^*$ and $Q^*$ are assumed to belong to known classes of functions $\F$ and $\Q$, respectively (which may be infinite-dimensional and depend on the sample size $n$).
Let $\Y \subseteq \R^{d_y}$, $\X \subseteq \R^{d_x}$, and $\Xi \subseteq \R^{d_y}$ denote the supports of $Y$, $X$, and $\varepsilon$, respectively.
Additionally, let $P_{Y \mid X = x}$ denote the conditional distribution of $Y$ given $X = x$ and $P_X$ and $P_{\varepsilon}$ denote the distributions of~$X$ and~$\varepsilon$, respectively.
We assume that~$\Y$ is nonempty and convex and $Q^*(x) \succ 0$ for a.e.\ $x \in \X$.

Under the above assumptions, the conditional stochastic program~\eqref{eqn:sp} is equivalent to
\begin{align}
\label{eqn:speq}
v^*(x) &:= \uset{z \in \Z}{\min} \left\lbrace  g(z;x) := \mathbb{E}\bigl[c(z,f^*(x) + Q^*(x)\varepsilon)\bigr] \right\rbrace,
\end{align}
where the expectation above is computed with respect to the distribution $P_{\varepsilon}$ of~$\varepsilon$.
We assume that the feasible set $\Z \subset \R^{d_z}$ is nonempty and compact, $\expect{\abs{c(z,f^*(x)+\varepsilon)}} < +\infty$ for each $z \in \Z$ and a.e.\ $x \in \X$, and the function~$g(\cdot;x)$ is lower semicontinuous on $\Z$ for a.e.\ $x \in \X$.
These assumptions ensure that problem~\eqref{eqn:speq} is well-defined and its set of optimal solutions $S^*(x)$ is nonempty for a.e.\ $x \in \X$.

Let~$\D_n := \{(y^i,x^i)\}_{i=1}^{n}$ denote the joint observations of $(Y,X)$ and $\{\varepsilon^i\}_{i=1}^{n}$ denote the corresponding realizations of the errors~$\varepsilon$. Note that these realizations of~$\varepsilon$ satisfy
\[
\varepsilon^i = \bigl[Q^*(x^i)\bigr]^{-1} (y^i - f^*(x^i)), \quad \forall i \in [n].
\]
If we know the functions~$f^*$ and $Q^*$, then we can construct the following \textit{full-information SAA} (FI-SAA) to problem~\eqref{eqn:speq} using the data~$\D_n$:
\begin{align}
\label{eqn:fullinfsaa}
&\uset{z \in \Z}{\min} \biggl\{ g^*_n(z;x) := \dfrac{1}{n} \displaystyle\sum_{i=1}^{n} c(z,f^*(x) + Q^*(x)\varepsilon^i) \biggr\}.
\end{align}

Because the functions~$f^*$ and~$Q^*$ are unknown, we first estimate them by $\hf_n$ and $\hQ_n$, respectively, using a regression method on the data~$\D_n$ (see Section~\ref{sec:regr} for details).
Assuming that the estimate~$\hQ_n$ is a.s.\ positive definite on~$\X$ (i.e., it a.s.\ satisfies $\hQ_n(x) \succ 0$ for a.e.\ $x \in \X$), we then use the empirical estimates 
\[
\heps^i_n := \bigl[\hQ_n(x^i)\bigr]^{-1} (y^i - \hf_n(x^i)), \quad \forall i \in [n],
\]
of $\{\varepsilon^i\}_{i=1}^{n}$ to construct the following {\it empirical residuals-based SAA} (ER-SAA) to problem~\eqref{eqn:speq} in the heteroscedastic setting (cf.\ \cite{kannan2020data,kannan2020residuals})\footnote{We can also construct similar generalizations of the Jackknife-based SAAs in~\cite{kannan2020data}.}:
\begin{align}
\label{eqn:app}
\hv^{ER}_n(x) &:= \uset{z \in \Z}{\min} \biggl\{ \hg^{ER}_n(z;x) := \dfrac{1}{n}\displaystyle\sum_{i=1}^{n} c\bigl(z,\proj{\Y}{\hf_n(x) + \hQ_n(x)\heps^i_{n}}\bigr)\biggr\}.
\end{align}
Let $\hz^{ER}_n(x)$ denote an optimal solution to problem~\eqref{eqn:app} and~$\hS^{ER}_n(x)$ denote its optimal solution set. 
Additionally, let $P^*_n(x)$ and $\hat{P}^{ER}_n(x)$ denote the estimates of the conditional distribution $P_{Y \mid X = x}$ of $Y$ given $X = x$ corresponding to the FI-SAA problem~\eqref{eqn:fullinfsaa} and ER-SAA problem~\eqref{eqn:app}, respectively, i.e.,  
\[
P^*_n(x) := \frac{1}{n} \sum_{i=1}^{n} \delta_{f^*(x)+Q^*(x)\varepsilon^i} \quad \text{and} \quad \hat{P}^{ER}_n(x) := \dfrac{1}{n} \sum_{i=1}^{n} \delta_{\proj{\Y}{\hf_n(x) + \hQ_n(x)\heps^i_{n}}}.
\]
When we only have a limited number of observations $n$, the following residuals-based DRO formulation provides an alternative to the ER-SAA problem~\eqref{eqn:app} that can yield solutions with better out-of-sample performance (cf.\ \cite{kannan2020residuals}):
\begin{alignat}{2}
\label{eqn:dro}
&\uset{z \in \Z}{\min} \: \uset{Q \in \hP_n(x)}{\sup} \: && \expectation{Y \sim Q}{c(z,Y)},
\end{alignat}
where $\hP_n(x)$ is an ambiguity set for~$P_{Y \mid X = x}$.
Following~\cite{kannan2020residuals}, we call problem~\eqref{eqn:dro} with~$\hP_n(x)$ centered at $\hat{P}^{ER}_n(x)$ the {\it empirical residuals-based DRO} (ER-DRO) problem.

\subsection{Theoretical results for the heteroscedastic setting}

For the homoscedastic case, i.e., when $Q^* \equiv \hQ_n \equiv I$ and so the model class~$\Q$ comprises only the constant function $Q: x \mapsto I$, $\forall x \in \X$,~\citet{kannan2020data,kannan2020residuals} investigate conditions under which the optimal value of problems~\eqref{eqn:app} and~\eqref{eqn:dro} asymptotically converge in probability to those of the true problem~\eqref{eqn:speq}.
They also identify conditions under which every accumulation point of a sequence of optimal solutions to problems~\eqref{eqn:app} and~\eqref{eqn:dro} is in probability an optimal solution to problem~\eqref{eqn:speq} and outline conditions under which solutions to problems~\eqref{eqn:app} and~\eqref{eqn:dro} possess finite sample guarantees.
An integral part of this analysis is bounding a distance between the empirical distributions $\hat{P}^{ER}_n(x)$ and $P^*_n(x)$. 

By the Lipschitz continuity of orthogonal projections, we have for each $x \in \X$
\begin{align*}
&\qquad\qquad \norm{\proj{\Y}{\hf_n(x) + \hQ_n(x)\heps^i_{n}} - (f^*(x) + Q^*(x)\varepsilon^i)} \leq \norm{\teps^{i}_{n}(x)}, \qquad \forall i \in [n], \end{align*}
where the $i$th deviation term $\teps^{i}_{n}(x)$ is given by
\begin{align*}
&\teps^{i}_{n}(x) := (\hf_n(x) + \hQ_n(x)\heps^i_{n}) - (f^*(x) + Q^*(x)\varepsilon^i).
\end{align*}
The analysis in~\cite{kannan2020data,kannan2020residuals} implies that under certain assumptions on the stochastic program~\eqref{eqn:speq}, asymptotic and finite sample guarantees on the power mean deviation term $(\frac{1}{n} \sum_{i=1}^{n} \norm{\teps^i_n(x)}^p)^{1/p}$ for a suitable value of $p \geq 1$ translate to asymptotic and finite sample guarantees on the optimal value and optimal solutions to the ER-SAA problem~\eqref{eqn:app} and the ER-DRO problem~\eqref{eqn:dro}.
Specifically, the analyses in~\cite{kannan2020data,kannan2020residuals} imply that theoretical guarantees on the term $(\frac{1}{n} \sum_{i=1}^{n} \norm{\teps^i_n(x)}^p)^{1/p}$ for $p = 1$ and $p = 2$ translate to theoretical guarantees on solutions to problems~\eqref{eqn:app} and~\eqref{eqn:dro} for a class of two-stage stochastic mixed-integer programs (MIPs) with continuous recourse and, in the ER-DRO setting, to broad families of ambiguity sets.

We now provide concrete examples of how guarantees on the {\it mean deviation term} $\frac{1}{n} \sum_{i=1}^{n} \norm{\teps^i_n(x)}$ (i.e., when $p = 1$) translate to guarantees on the ER-SAA problem~\eqref{eqn:app}. 
In addition to focusing on the ER-SAA problem~\eqref{eqn:app} for brevity, we narrow our attention to stochastic programs~\eqref{eqn:speq} whose objective function satisfies the following Lipschitz condition.

\begin{assumption}
\label{ass:equilipschitz}
For each $z \in \Z$, the function~$c(z,\cdot)$ is Lipschitz continuous on~$\Y$ with Lipschitz constant $L(z)$ satisfying $\sup_{z \in \Z} L(z) < +\infty$.
\end{assumption}

As an example, Appendix~EC.2 of~\cite{kannan2020data} verifies that Assumption~\ref{ass:equilipschitz} holds for two-stage stochastic MIPs with continuous recourse under mild conditions.
For extensions of the below results to a broader class of stochastic programs~\eqref{eqn:speq} and to the ER-DRO problem~\eqref{eqn:dro}, 
we refer the readers to~\cite{kannan2020data,kannan2020residuals}.

We now list conditions on the FI-SAA problem~\eqref{eqn:fullinfsaa} under which consistency and asymptotic optimality, rates of convergence, and finite sample guarantees---to be defined precisely in respective theorems below---can be achieved for the ER-SAA approximation~\eqref{eqn:app} of the true problem~\eqref{eqn:speq} in the heteroscedastic setting.
As mentioned, a key component of this analysis requires respective conditions to be satisfied by the mean deviation term; these are investigated in Section~\ref{sec:bounds}.
Section~\ref{sec:regr} presents examples of regression/learning setups that satisfy the assumptions set forth for the heteroscedastic setting.

We begin with a uniform weak LLN assumption on the FI-SAA objective (see Assumption~3 of~\cite{kannan2020data} and the surrounding discussion for conditions under which it holds).
Along with suitable convergence of the mean deviation term, this assumption helps us establish {\it uniform} convergence in probability of the sequence of objective functions of the ER-SAA problem~\eqref{eqn:app} to the objective function of the true problem~\eqref{eqn:speq} on the feasible set~$\Z$ (see Proposition~1 of~\cite{kannan2020data}). This in turn provides the building block for consistency and asymptotic optimality.

\begin{assumption}
\label{ass:uniflln}
For a.e.~$x \in \X$, the sequence of sample average objective functions $\left\lbrace g^*_n(\cdot;x) \right\rbrace$ of the FI-SAA problem~\eqref{eqn:fullinfsaa} converges in probability to the objective function $g(\cdot;x)$ of the true problem~\eqref{eqn:speq} uniformly on the set~$\Z$.
\end{assumption}

Our first result implies that consistency of the mean deviation term $\frac{1}{n} \sum_{i=1}^{n} \norm{\teps^i_n(x)}$ translates to consistency and asymptotic optimality of solutions to the ER-SAA problem~\eqref{eqn:app}.

\begin{theorem}[{\bf Consistency and asymptotic optimality}]
\label{thm:consist}
Suppose Assumptions~\ref{ass:equilipschitz} and~\ref{ass:uniflln} hold and the mean deviation term converges to zero in probability, i.e., $\frac{1}{n} \sum_{i=1}^{n} \norm{\teps^i_n(x)} \convinprob 0$ for a.e.\ $x \in \X$.
Then for a.e.\ $x \in \X$
\[
\hv^{ER}_n(x) \xrightarrow{p} v^*(x), \quad \mathbb{D}\bigl(\hS^{ER}_n(x),S^*(x)\bigr) \xrightarrow{p} 0, \quad \text{and} \quad \sup_{z \in \hS^{ER}_n(x)} g(z;x) \xrightarrow{p} v^*(x).
\]
\end{theorem}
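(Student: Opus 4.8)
The plan is to follow the classical two-step route for consistency of sample-average-type approximations: first establish that the ER-SAA objectives $\hg^{ER}_n(\cdot;x)$ converge in probability to the true objective $g(\cdot;x)$ \emph{uniformly} on the compact set~$\Z$, and then read off the three conclusions from the standard stability of minima and near-minimizers under such uniform convergence, using compactness of~$\Z$ and lower semicontinuity of $g(\cdot;x)$. Observe first that every evaluation of $c$ appearing in $\hg^{ER}_n$, in $g^*_n$, and in $g$ is at a point of~$\Y$ (for the true scenarios this is why the projection inequality stated before the theorem holds), where $c(z,\cdot)$ is finite by Assumption~\ref{ass:equilipschitz}, so nothing breaks because $c$ is extended-real-valued.

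For the uniform-convergence step, fix a generic $x$ and set $\eta_n(x) := \sup_{z\in\Z}\abs{\hg^{ER}_n(z;x) - g(z;x)}$. I would route through the FI-SAA objective~\eqref{eqn:fullinfsaa}, writing $\eta_n(x) \le \sup_{z\in\Z}\abs{\hg^{ER}_n(z;x) - g^*_n(z;x)} + \sup_{z\in\Z}\abs{g^*_n(z;x) - g(z;x)}$, where the second term converges to zero in probability for a.e.\ $x$ by Assumption~\ref{ass:uniflln}. For the first term I would invoke Assumption~\ref{ass:equilipschitz} together with the nonexpansiveness of $\proj{\Y}{\cdot}$ recorded just before the theorem, which gives $\norm{\proj{\Y}{\hf_n(x)+\hQ_n(x)\heps^i_n} - (f^*(x)+Q^*(x)\varepsilon^i)}\le\norm{\teps^i_n(x)}$; averaging over $i$ and pulling out $\bar L := \sup_{z\in\Z}L(z)<+\infty$ yields $\sup_{z\in\Z}\abs{\hg^{ER}_n(z;x) - g^*_n(z;x)}\le \bar L\cdot\frac{1}{n}\sum_{i=1}^n\norm{\teps^i_n(x)}$, which converges to zero in probability by the hypothesis on the mean deviation term. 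Hence $\eta_n(x)\convinprob 0$ for a.e.\ $x\in\X$; this is the heteroscedastic counterpart of Proposition~1 of~\cite{kannan2020data}.

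The rest is routine, carried out on the full-measure set of $x$ for which Step~1 holds, $g(\cdot;x)$ is lower semicontinuous on~$\Z$, and $S^*(x)\neq\emptyset$. Optimal-value convergence is immediate from $\abs{\hv^{ER}_n(x) - v^*(x)} = \abs{\min_{z\in\Z}\hg^{ER}_n(z;x) - \min_{z\in\Z}g(z;x)}\le\eta_n(x)$. For the other two, the workhorse estimate is that, for any $z^*\in S^*(x)$ and any $\hz_n\in\hS^{ER}_n(x)$, $g(\hz_n;x)\le\hg^{ER}_n(\hz_n;x)+\eta_n(x) = \hv^{ER}_n(x)+\eta_n(x)\le\hg^{ER}_n(z^*;x)+\eta_n(x)\le g(z^*;x)+2\eta_n(x) = v^*(x)+2\eta_n(x)$; taking the supremum over $\hz_n$ and combining with $g(\cdot;x)\ge v^*(x)$ on~$\Z$ gives $\sup_{z\in\hS^{ER}_n(x)}g(z;x)\convinprob v^*(x)$. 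For $\dev{\hS^{ER}_n(x)}{S^*(x)}\convinprob 0$ I would argue event-wise: fixing $\delta>0$, compactness of~$\Z$ and lower semicontinuity of $g(\cdot;x)$ make $\mu_\delta := \inf\{g(z;x)-v^*(x) : z\in\Z,\ \text{dist}(z,S^*(x))\ge\delta\}$ strictly positive (unless the index set is empty, in which case the deviation is trivially at most~$\delta$), and on $\{2\eta_n(x)<\mu_\delta\}$ the workhorse estimate forces $\text{dist}(\hz_n,S^*(x))<\delta$ for every $\hz_n\in\hS^{ER}_n(x)$, so $\prob{\dev{\hS^{ER}_n(x)}{S^*(x)}>\delta}\le\prob{2\eta_n(x)\ge\mu_\delta}\to 0$; letting $\delta\downarrow 0$ finishes. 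I expect the obstacles here to be bookkeeping rather than conceptual: the measurability of the random set-valued quantities built from $\hS^{ER}_n(x)$ (handled as in~\cite{kannan2020data,kannan2020residuals} via measurable-selection arguments) and the legitimacy of the uniform-in-$z$ bound in Step~1, which is precisely where the uniform Lipschitz constant $\sup_{z\in\Z}L(z)<+\infty$ of Assumption~\ref{ass:equilipschitz} is essential.
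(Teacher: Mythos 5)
Your proposal is correct and follows essentially the same route as the paper, which simply defers to Proposition~1 and Theorem~1 of the cited work: uniform convergence of $\hg^{ER}_n(\cdot;x)$ to $g(\cdot;x)$ on $\Z$ obtained by splitting through the FI-SAA objective and bounding the first piece by $\sup_{z\in\Z}L(z)$ times the mean deviation term, followed by the standard stability argument for optimal values and solution sets under compactness and lower semicontinuity. No substantive differences to note.
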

\begin{proof}
See the proofs of Proposition~1 and Theorem~1 of~\citet{kannan2020data}.
\end{proof}

Next, we refine Assumption~\ref{ass:uniflln} to assume that the sequence of objective functions of the FI-SAA problem~\eqref{eqn:fullinfsaa} converges to the objective function of the true problem~\eqref{eqn:speq} at a suitable rate (see Assumption~5 of~\cite{kannan2020data} and the surrounding discussion for conditions under which it holds).

\begin{assumption}
\label{ass:functionalclt}
The function~$c$ in problem~\eqref{eqn:speq} and the data~$\D_n$ satisfy the following functional central limit theorem for the FI-SAA objective:
\[
\sqrt{n} \left( g^*_n(\cdot;x) - g(\cdot;x) \right) \xrightarrow{d} V(\cdot;x), \quad \text{for a.e. } x \in \X,
\]
where $g^*_n(\cdot;x)$, $g(\cdot;x)$, and $V(\cdot;x)$ are (random) elements of $\Linf(\Z)$, the Banach space of essentially bounded functions on $\Z$ equipped with the supremum norm.
\end{assumption}

Our second result implies that rates of convergence of the mean deviation term $\frac{1}{n} \sum_{i=1}^{n} \norm{\teps^i_n(x)}$ to zero directly translate to rates of convergence of the suboptimality of ER-SAA solutions to zero.

\begin{theorem}[{\bf Rate of convergence}]
\label{thm:convrate}
Suppose Assumptions~\ref{ass:equilipschitz} and~\ref{ass:functionalclt} hold and there exists a constant $r \in (0,1]$ such that $\frac{1}{n} \sum_{i=1}^{n} \norm{\teps^i_n(x)} = O_p(n^{-r/2})$ for a.e.\ $x \in \X$.
Then, for a.e.\ $x \in \X$
\[
\abs*{\hv^{ER}_n(x) - v^*(x)} = O_p(n^{-r/2}) \quad \text{and} \quad \abs*{g(\hz^{ER}_n(x);x) - v^*(x)} = O_p(n^{-r/2}).
\]
\end{theorem}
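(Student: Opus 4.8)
The plan is to bound the suboptimality by a \emph{uniform} distance on $\Z$ between the ER-SAA objective $\hg^{ER}_n(\cdot;x)$ and the true objective $g(\cdot;x)$, using the FI-SAA objective $g^*_n(\cdot;x)$ as an intermediary. Since $\Z$ is compact and both $\hz^{ER}_n(x)$ and any minimizer of $g(\cdot;x)$ lie in $\Z$, the elementary stability bound $\lvert \min_{z} A(z) - \min_{z} B(z)\rvert \le \sup_{z}\lvert A(z)-B(z)\rvert$ gives
\[
\bigl\lvert \hv^{ER}_n(x) - v^*(x) \bigr\rvert \;\le\; \sup_{z \in \Z} \bigl\lvert \hg^{ER}_n(z;x) - g(z;x) \bigr\rvert \;\le\; \sup_{z \in \Z} \bigl\lvert \hg^{ER}_n(z;x) - g^*_n(z;x) \bigr\rvert + \sup_{z \in \Z} \bigl\lvert g^*_n(z;x) - g(z;x) \bigr\rvert ,
\]
so it suffices to show each term on the right is $O_p(n^{-r/2})$ for a.e.\ $x \in \X$.

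For the first term I would combine Assumption~\ref{ass:equilipschitz} with the Lipschitz continuity of the orthogonal projection onto $\Y$. For each $z \in \Z$ and $i \in [n]$, the inequality displayed just before Assumption~\ref{ass:equilipschitz} yields
\[
\bigl\lvert c\bigl(z,\proj{\Y}{\hf_n(x)+\hQ_n(x)\heps^i_n}\bigr) - c\bigl(z,f^*(x)+Q^*(x)\varepsilon^i\bigr) \bigr\rvert \;\le\; L(z)\, \norm{\teps^i_n(x)} ,
\]
and averaging over $i$ and taking the supremum over $z$ gives
\[
\sup_{z \in \Z} \bigl\lvert \hg^{ER}_n(z;x) - g^*_n(z;x) \bigr\rvert \;\le\; \Bigl( \sup_{z \in \Z} L(z) \Bigr) \cdot \frac{1}{n}\sum_{i=1}^{n} \norm{\teps^i_n(x)} \;=\; O(1) \cdot O_p(n^{-r/2}) \;=\; O_p(n^{-r/2}),
\]
using the hypothesis on the mean deviation term and the finiteness of $\sup_{z\in\Z} L(z)$ from Assumption~\ref{ass:equilipschitz}. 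For the second term, Assumption~\ref{ass:functionalclt} gives $\sqrt{n}\bigl(g^*_n(\cdot;x) - g(\cdot;x)\bigr) \xrightarrow{d} V(\cdot;x)$ in $\Linf(\Z)$; continuity of the supremum norm then yields $\sup_{z\in\Z}\lvert g^*_n(z;x)-g(z;x)\rvert = O_p(n^{-1/2})$, which is $O_p(n^{-r/2})$ since $r \in (0,1]$. Combining the two bounds gives $\lvert\hv^{ER}_n(x) - v^*(x)\rvert = O_p(n^{-r/2})$.

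For the second conclusion, I would write
\[
0 \;\le\; g(\hz^{ER}_n(x);x) - v^*(x) \;=\; \bigl[ g(\hz^{ER}_n(x);x) - \hg^{ER}_n(\hz^{ER}_n(x);x) \bigr] + \bigl[ \hv^{ER}_n(x) - v^*(x) \bigr] ,
\]
where the left inequality uses feasibility $\hz^{ER}_n(x) \in \Z$ and the identity $\hg^{ER}_n(\hz^{ER}_n(x);x) = \hv^{ER}_n(x)$. The first bracket is at most $\sup_{z\in\Z}\lvert g(z;x) - \hg^{ER}_n(z;x)\rvert = O_p(n^{-r/2})$ by the uniform bound already established, and the second bracket is $O_p(n^{-r/2})$ as well, so $\lvert g(\hz^{ER}_n(x);x) - v^*(x)\rvert = O_p(n^{-r/2})$.

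I do not anticipate a genuine obstacle: the argument is a deterministic sandwich on the compact set $\Z$ together with two off-the-shelf stochastic-order estimates, and it parallels the corresponding result in the homoscedastic analysis of~\citet{kannan2020data}. The only points needing care are (i) that the projection/Lipschitz estimate is uniform in $z$, for which $\sup_{z\in\Z}L(z)<\infty$ is precisely the required hypothesis; (ii) combining the $O_p(n^{-1/2})$ rate from the functional CLT with the possibly slower $O_p(n^{-r/2})$ rate of the mean deviation term, which is legitimate because $r \le 1$; and (iii) that all statements are asserted only for a.e.\ $x \in \X$, matching the scope of Assumption~\ref{ass:functionalclt} and of the hypothesis on $\teps^i_n(x)$.
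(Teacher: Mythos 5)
Your proof is correct and follows essentially the same route as the paper, which simply defers to the proofs of Theorem~11 of \citet{kannan2020residuals} and Theorem~2 of \citet{kannan2020data}: those arguments likewise bound $\sup_{z\in\Z}\lvert\hg^{ER}_n(z;x)-g^*_n(z;x)\rvert$ by $\bigl(\sup_{z\in\Z}L(z)\bigr)\cdot\frac{1}{n}\sum_{i=1}^n\norm{\teps^i_n(x)}$ via the projection/Lipschitz inequality and combine this with the $O_p(n^{-1/2})$ uniform rate from the functional CLT. Your handling of the two conclusions and of the points of care you list matches the intended argument.
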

\begin{proof}
Follows from the proof of Theorem~11 of~\citet{kannan2020residuals} (cf.\ Theorem~2 of~\cite{kannan2020data}).
\end{proof}

Finally, we refine Assumption~\ref{ass:functionalclt} to assume that the sequence of objectives of the FI-SAA problem~\eqref{eqn:fullinfsaa} possess a finite sample guarantee (see~\citep[Assumption~7]{kannan2020data} and the discussion after it for conditions under which it holds).

\begin{assumption}
\label{ass:tradsaalargedev}
The FI-SAA problem~\eqref{eqn:fullinfsaa} possesses the following uniform exponential bound property: for any constant $\kappa > 0$ and a.e.~$x \in \X$, there exist positive constants $K(\kappa,x)$ and $\beta(\kappa,x)$ such that
\[
\mathbb{P} \Bigl\{\uset{z \in \Z}{\sup} \: \abs*{ g^*_n(z;x) - g(z;x)} > \kappa \Bigr\} \leq K(\kappa,x) \exp(-n\beta(\kappa,x)), \quad \forall n \in \mathbb{N}.
\]
\end{assumption}

Our final result of this section implies that finite sample guarantees on the mean deviation term $\frac{1}{n} \sum_{i=1}^{n} \norm{\teps^i_n(x)}$ translate to finite sample guarantees on solutions to the ER-SAA problem~\eqref{eqn:app}.

\begin{theorem}[{\bf Finite sample guarantee}]
\label{thm:finitesample}
Suppose Assumptions~\ref{ass:equilipschitz} and~\ref{ass:tradsaalargedev} hold and for any constant $\kappa > 0$ and a.e.\ $x \in \X$, there exist positive constants $\tilde{K}(\kappa,x)$ and $\tilde{\beta}(\kappa,x)$ such that 
\[
\mathbb{P}\biggl\{ \frac{1}{n} \sum_{i=1}^{n} \norm{\teps^i_n(x)} > \kappa\biggr\} \leq \tilde{K}(\kappa,x) \exp\bigl(-n\tilde{\beta}(\kappa,x)\bigr), \quad \forall n \in \mathbb{N}.
\]
Then, for a.e.\ $x \in \X$, given constant $\eta > 0$, there exist positive constants $Q(\eta,x)$ and $\gamma(\eta,x)$ (depending on $K$, $\tilde{K}$, $\beta$, and $\tilde{\beta}$) such that
\[\prob{\textup{dist}(\hz^{ER}_n(x),S^*(x)) \geq \eta} \leq Q(\eta,x) \exp(-n\gamma(\eta,x)), \quad \forall n \in \mathbb{N}.
\]
\end{theorem}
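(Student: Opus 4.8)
The plan is to follow the two-step template behind the analogous homoscedastic results of~\cite{kannan2020data,kannan2020residuals}. \emph{Step~1:} upgrade the hypothesized exponential bound on the mean deviation term, together with Assumption~\ref{ass:tradsaalargedev}, into a uniform exponential bound on the ER-SAA objective---namely, for every $\kappa > 0$ and a.e.\ $x \in \X$ there exist constants $\hat{K}(\kappa,x), \hat{\beta}(\kappa,x) > 0$ with
\[
\prob{\sup_{z \in \Z} \abs{\hg^{ER}_n(z;x) - g(z;x)} > \kappa} \le \hat{K}(\kappa,x)\exp\bigl(-n\hat{\beta}(\kappa,x)\bigr), \qquad \forall n \in \mathbb{N}.
\]
\emph{Step~2:} convert this objective-level bound into the finite sample guarantee on $\hz^{ER}_n(x)$ by a deterministic argument relating uniform closeness of objectives to proximity of minimizers. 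Throughout, Assumption~\ref{ass:equilipschitz} forces $c(z,\cdot)$ to be real-valued on~$\Y$, so that---under the standing well-posedness assumptions---both $g(\cdot;x)$ and $\hg^{ER}_n(\cdot;x)$ are real-valued and lower semicontinuous on the compact set~$\Z$ for a.e.\ $x \in \X$; this is used freely below.

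For Step~1, decompose $\abs{\hg^{ER}_n(z;x) - g(z;x)} \le \abs{\hg^{ER}_n(z;x) - g^*_n(z;x)} + \abs{g^*_n(z;x) - g(z;x)}$ for each $z \in \Z$. Since $c(z,\cdot)$ is $L(z)$-Lipschitz on~$\Y$ and orthogonal projection onto~$\Y$ is $1$-Lipschitz, termwise application of Assumption~\ref{ass:equilipschitz} followed by the inequality $\norm*{\proj{\Y}{\hf_n(x) + \hQ_n(x)\heps^i_n} - (f^*(x)+Q^*(x)\varepsilon^i)} \le \norm{\teps^i_n(x)}$ recorded before Assumption~\ref{ass:equilipschitz} yields, with $\bar{L} := \sup_{z \in \Z} L(z) < +\infty$,
\[
\abs{\hg^{ER}_n(z;x) - g^*_n(z;x)} \le \bar{L}\,\frac{1}{n}\sum_{i=1}^{n}\norm{\teps^i_n(x)}, \qquad \forall z \in \Z.
\]
Hence, for any $\kappa > 0$ and the split $\kappa_1 := \kappa/2$, $\kappa_2 := \kappa/(2\bar{L})$ (so $\kappa = \kappa_1 + \bar{L}\kappa_2$), a union bound shows that $\prob{\sup_{z\in\Z}\abs{\hg^{ER}_n(z;x) - g(z;x)} > \kappa}$ is at most $\prob{\sup_{z\in\Z}\abs{g^*_n(z;x) - g(z;x)} > \kappa_1} + \prob{\tfrac{1}{n}\sum_{i=1}^{n}\norm{\teps^i_n(x)} > \kappa_2}$; by Assumption~\ref{ass:tradsaalargedev} and the theorem's hypothesis these two probabilities are bounded by $K(\kappa_1,x)\exp(-n\beta(\kappa_1,x))$ and $\tilde{K}(\kappa_2,x)\exp(-n\tilde{\beta}(\kappa_2,x))$, respectively, and absorbing them into a single exponential (e.g.\ via $a e^{-n\alpha} + b e^{-n\delta} \le (a+b)e^{-n\min\{\alpha,\delta\}}$) gives the constants $\hat{K}(\kappa,x), \hat{\beta}(\kappa,x)$ claimed in Step~1.

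For Step~2, fix a.e.\ $x \in \X$ (so $S^*(x) \ne \emptyset$), fix $\eta > 0$, and let $C := \{z \in \Z : \textup{dist}(z, S^*(x)) \ge \eta\}$. If $C = \emptyset$ then $\textup{dist}(\hz^{ER}_n(x), S^*(x)) < \eta$ deterministically and there is nothing to prove, so assume $C \ne \emptyset$ and set $\rho(\eta,x) := \inf_{z \in C}\bigl(g(z;x) - v^*(x)\bigr)$. Since $C$ is a closed subset of the compact set~$\Z$ and $g(\cdot;x)$ is real-valued and lower semicontinuous, the infimum is attained at some $\bar{z} \in C$; because $\textup{dist}(\bar{z}, S^*(x)) \ge \eta > 0$ we have $\bar{z} \notin S^*(x)$, and since $S^*(x)$ is exactly the set of minimizers of $g(\cdot;x)$ over~$\Z$ this forces $g(\bar{z};x) > v^*(x)$, so $\rho(\eta,x) > 0$. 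I claim that if $\sup_{z\in\Z}\abs{\hg^{ER}_n(z;x) - g(z;x)} < \tfrac{1}{2}\rho(\eta,x)$ then $\textup{dist}(\hz^{ER}_n(x), S^*(x)) < \eta$. Indeed, for any $z^* \in S^*(x)$, optimality of $\hz^{ER}_n(x)$ in~\eqref{eqn:app} together with this uniform bound yields
\[
\hg^{ER}_n(\hz^{ER}_n(x);x) \ \le\ \hg^{ER}_n(z^*;x) \ <\ g(z^*;x) + \tfrac{1}{2}\rho(\eta,x) \ =\ v^*(x) + \tfrac{1}{2}\rho(\eta,x),
\]
whereas $\textup{dist}(\hz^{ER}_n(x), S^*(x)) \ge \eta$ would place $\hz^{ER}_n(x) \in C$ and hence, by the same uniform bound, force $\hg^{ER}_n(\hz^{ER}_n(x);x) > g(\hz^{ER}_n(x);x) - \tfrac{1}{2}\rho(\eta,x) \ge v^*(x) + \tfrac{1}{2}\rho(\eta,x)$---a contradiction. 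Taking contrapositives and using $\tfrac{1}{4}\rho(\eta,x) < \tfrac{1}{2}\rho(\eta,x)$,
\[
\prob{\textup{dist}(\hz^{ER}_n(x), S^*(x)) \ge \eta} \ \le\ \prob{\sup_{z\in\Z}\abs{\hg^{ER}_n(z;x) - g(z;x)} > \tfrac{1}{4}\rho(\eta,x)},
\]
and applying Step~1 with $\kappa = \tfrac{1}{4}\rho(\eta,x)$ bounds the right-hand side, proving the theorem with $Q(\eta,x) := \hat{K}(\tfrac{1}{4}\rho(\eta,x), x)$ and $\gamma(\eta,x) := \hat{\beta}(\tfrac{1}{4}\rho(\eta,x), x)$.

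I do not expect a substantive obstacle here: the only heteroscedasticity-specific ingredient is that the deviation term $\norm{\teps^i_n(x)}$---now assembled from $\hQ_n$, $\heps^i_n$, and the projection onto~$\Y$---enters the bound on $\abs{\hg^{ER}_n(z;x) - g^*_n(z;x)}$ in exactly the same way as its homoscedastic counterpart, so the finite sample arguments of~\cite{kannan2020data,kannan2020residuals} carry over once that bound is in place, modulo tracking the constants through the union bound. The two points needing mild care are (a) the strict positivity of $\rho(\eta,x)$, which rests on compactness of~$\Z$, lower semicontinuity of $g(\cdot;x)$, and $S^*(x)$ being exactly the minimizer set---all available under the standing assumptions and Assumption~\ref{ass:equilipschitz}; and (b) the fact that the conclusion controls only $\textup{dist}(\hz^{ER}_n(x), S^*(x))$ and not a convergence rate, since no local growth (H\"older-type) condition on $g(\cdot;x)$ near $S^*(x)$ is imposed.
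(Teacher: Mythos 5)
Your proof is correct and follows essentially the same route as the paper, which simply defers to Theorem~3 of~\citet{kannan2020data}: a union bound combining the FI-SAA uniform exponential bound (Assumption~\ref{ass:tradsaalargedev}) with the Lipschitz/projection bound $\abs{\hg^{ER}_n(z;x) - g^*_n(z;x)} \le \bar{L}\,\tfrac{1}{n}\sum_{i}\norm{\teps^i_n(x)}$, followed by the standard deterministic argument converting uniform objective closeness (below half the positive gap $\rho(\eta,x)$ attained on the compact set $\{z \in \Z : \textup{dist}(z,S^*(x)) \ge \eta\}$) into $\textup{dist}(\hz^{ER}_n(x),S^*(x)) < \eta$. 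Your reconstruction of that cited argument, including the verification that $\rho(\eta,x) > 0$ via lower semicontinuity and compactness, is sound.
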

\begin{proof}
See Theorem~3 of~\citet{kannan2020data}.
\end{proof}

In the remainder of this note, we identify conditions under which the asymptotic and finite sample guarantees required by Theorems~\ref{thm:consist},~\ref{thm:convrate}, and~\ref{thm:finitesample} hold for the mean deviation term $\frac{1}{n} \sum_{i=1}^{n} \norm{\teps^i_n(x)}$.
A similar analysis can be carried out for the root-mean-square deviation term $(\frac{1}{n} \sum_{i=1}^{n} \norm{\teps^i_n(x)}^2)^{1/2}$, which is required by~\cite{kannan2020residuals} for the analysis of phi-divergence-based ER-DRO problems~\eqref{eqn:dro} for stochastic programs satisfying Assumption~\ref{ass:equilipschitz}.
We omit these details for brevity.

\section{Guarantees for the mean deviation term}
\label{sec:bounds}

In this section, we investigate conditions under which the mean deviation term $\frac{1}{n} \sum_{i=1}^{n} \norm{\teps^i_n(x)}$ converges to zero in probability at a certain rate and possesses finite sample guarantees.
We begin by bounding the mean deviation in terms of the functions~$f^*$ and~$Q^*$, their regression estimates~$\hf_n$ and~$\hQ_n$, and the data~$\D_n$.
Throughout, we implicitly assume that the estimate~$\hQ_n$ a.s.\ satisfies $\hQ_n(x) \succ 0$ for a.e.\ $x \in \X$, which can be guaranteed by an appropriate choice of the model class~$\Q$.

\subsection{Bounding the mean deviation term}

We begin by noting that
\begin{align}
\label{eqn:meandeviation_int1}
\frac{1}{n} \sum_{i=1}^{n} \norm{\teps^i_n(x)} &= \frac{1}{n} \sum_{i=1}^{n} \norm{(\hf_n(x) + \hQ_n(x)\heps^i_{n}) - (f^*(x) + Q^*(x)\varepsilon^i)} \nonumber\\
&\leq \norm{\hf_n(x) - f^*(x)} + \frac{1}{n} \sum_{i=1}^{n} \norm{\hQ_n(x)\heps^i_{n} - Q^*(x)\varepsilon^i}.
\end{align}
We now bound the second term on the r.h.s.\ of inequality~\eqref{eqn:meandeviation_int1}.
We have
\begin{align}
\label{eqn:meandeviation_int2}
&\frac{1}{n} \sum_{i=1}^{n} \norm{\hQ_n(x)\heps^i_{n} - Q^*(x)\varepsilon^i} \nonumber\\
=& \frac{1}{n} \sum_{i=1}^{n} \bigl\lVert\hQ_n(x)\bigl[\hQ_n(x^i)\bigr]^{-1} (y^i - \hf_n(x^i)) - Q^*(x)\bigl[Q^*(x^i)\bigr]^{-1} (y^i - f^*(x^i))\bigr\rVert \nonumber\\
=& \frac{1}{n} \sum_{i=1}^{n} \bigl\lVert\hQ_n(x)\bigl[\hQ_n(x^i)\bigr]^{-1} \bigl(y^i - f^*(x^i) + f^*(x^i) - \hf_n(x^i)\bigr) - Q^*(x)\bigl[Q^*(x^i)\bigr]^{-1} (y^i - f^*(x^i))\bigr\rVert \nonumber\\
\leq& \frac{1}{n} \sum_{i=1}^{n} \bigl\lVert \bigl(\hQ_n(x)\bigl[\hQ_n(x^i)\bigr]^{-1} - Q^*(x)\bigl[Q^*(x^i)\bigr]^{-1} \bigr) (y^i - f^*(x^i))\bigr\rVert + \frac{1}{n} \sum_{i=1}^{n} \bigl\lVert \hQ_n(x)\bigl[\hQ_n(x^i)\bigr]^{-1} (f^*(x^i) - \hf_n(x^i))\bigr\rVert \nonumber\\
=& \frac{1}{n} \sum_{i=1}^{n} \bigl\lVert \bigl(\hQ_n(x)\bigl[\hQ_n(x^i)\bigr]^{-1} - Q^*(x)\bigl[Q^*(x^i)\bigr]^{-1}\bigr) Q^*(x^i) \varepsilon^i \bigr\rVert + \frac{1}{n} \sum_{i=1}^{n} \bigl\lVert\hQ_n(x) \bigl[\hQ_n(x^i)\bigr]^{-1} (f^*(x^i) - \hf_n(x^i)) \bigr\rVert,
\end{align}
where the final step follows from the definition of $\{\varepsilon^i\}_{i=1}^{n}$.
We have for each $i \in [n]$
\begin{align*}
\hQ_n(x) \bigl[\hQ_n(x^i)\bigr]^{-1} - Q^*(x)\bigl[Q^*(x^i)\bigr]^{-1} &= \hQ_n(x) \bigl(\bigl[\hQ_n(x^i)\bigr]^{-1} - \bigl[Q^*(x^i)\bigr]^{-1}\bigr) + [\hQ_n(x) - Q^*(x)] \bigl[Q^*(x^i)\bigr]^{-1}.
\end{align*}
Plugging the above equality into inequality~\eqref{eqn:meandeviation_int2}, we get
\begin{align}
\label{eqn:meandeviation_int3}
&\frac{1}{n} \sum_{i=1}^{n} \norm{\hQ_n(x)\heps^i_{n} - Q^*(x)\varepsilon^i} \nonumber\\
\leq& \frac{1}{n} \sum_{i=1}^{n} \Bigl( \norm{\hQ_n(x)} \bigl\lVert \bigl[\hQ_n(x^i)\bigr]^{-1} - \bigl[Q^*(x^i)\bigr]^{-1}\bigr\rVert \norm{Q^*(x^i)} + \norm{\hQ_n(x) - Q^*(x)} \Bigr) \norm{\varepsilon^i} + \nonumber\\
& \quad \frac{1}{n} \sum_{i=1}^{n} \norm{\hQ_n(x)} \bigl\lVert\bigl[\hQ_n(x^i)\bigr]^{-1}\bigr\rVert \norm{f^*(x^i) - \hf_n(x^i)} \nonumber \\
\leq& \norm{\hQ_n(x)} \biggl(\frac{1}{n} \sum_{i=1}^{n} \bigl\lVert \bigl[\hQ_n(x^i)\bigr]^{-1} - \bigl[Q^*(x^i)\bigr]^{-1}\bigr\rVert^2\biggr)^{1/2} \biggl(\frac{1}{n} \sum_{i=1}^{n} \norm{Q^*(x^i)}^4\biggr)^{1/4} \biggl(\frac{1}{n} \sum_{i=1}^{n} \norm{\varepsilon^i}^4\biggr)^{1/4} + \\ 
&\quad \norm{\hQ_n(x) - Q^*(x)}\biggl(\frac{1}{n} \sum_{i=1}^{n} \norm{\varepsilon^i}\biggr) + \norm{\hQ_n(x)} \biggl(\frac{1}{n} \sum_{i=1}^{n} \bigl\lVert \bigl[\hQ_n(x^i)\bigr]^{-1}\bigr\rVert^2\biggr)^{1/2} \biggl(\frac{1}{n} \sum_{i=1}^{n} \norm{f^*(x^i) - \hf_n(x^i)}^2\biggr)^{1/2}, \nonumber
\end{align}
where the last step above follows by repeated application of the Cauchy-Schwarz inequality.
Finally, using inequality~\eqref{eqn:meandeviation_int3} in inequality~\eqref{eqn:meandeviation_int1}, we get
\begin{align}
\label{eqn:meandeviation}
\frac{1}{n} \sum_{i=1}^{n} \norm{\teps^i_n(x)} &\leq \norm{\hf_n(x) - f^*(x)} + \norm{\hQ_n(x) - Q^*(x)}\biggl(\frac{1}{n} \sum_{i=1}^{n} \norm{\varepsilon^i}\biggr) + \nonumber \\
& \quad \norm{\hQ_n(x)} \biggl(\frac{1}{n} \sum_{i=1}^{n} \bigl\lVert \bigl[\hQ_n(x^i)\bigr]^{-1} - \bigl[Q^*(x^i)\bigr]^{-1}\bigr\rVert^2\biggr)^{1/2} \biggl(\frac{1}{n} \sum_{i=1}^{n} \norm{Q^*(x^i)}^4\biggr)^{1/4} \biggl(\frac{1}{n} \sum_{i=1}^{n} \norm{\varepsilon^i}^4\biggr)^{1/4} + \nonumber\\ 
& \quad \norm{\hQ_n(x)} \biggl(\frac{1}{n} \sum_{i=1}^{n} \bigl\lVert \bigl[\hQ_n(x^i)\bigr]^{-1}\bigr\rVert^2\biggr)^{1/2} \biggl(\frac{1}{n} \sum_{i=1}^{n} \norm{f^*(x^i) - \hf_n(x^i)}^2\biggr)^{1/2}.
\end{align}
In the remainder of this section, we rely on inequality~\eqref{eqn:meandeviation} to identify conditions under which the mean deviation term $\frac{1}{n} \sum_{i=1}^{n} \norm{\teps^i_n(x)}$ possesses asymptotic and finite sample guarantees.
We postpone the verification of these assumptions to Section~\ref{sec:regr}.

Before we proceed, we mention alternative ways to bound the mean deviation term $\frac{1}{n} \sum_{i=1}^{n} \norm{\teps^i_n(x)}$ that may be easier to verify in some contexts.
By slightly changing some of the steps leading to inequality~\eqref{eqn:meandeviation_int3}, the third term on the 
r.h.s.\ of inequality~\eqref{eqn:meandeviation} can be replaced with the term
\[
\norm{\hQ_n(x)} \biggl(\frac{1}{n} \sum_{i=1}^{n} \bigl\lVert \bigl[\hQ_n(x^i)\bigr]^{-1} Q^*(x^i) - I\bigr\rVert^2\biggr)^{1/2} \biggl(\frac{1}{n} \sum_{i=1}^{n} \norm{\varepsilon^i}^2\biggr)^{1/2}.
\]
When the second term in the expression above possesses the requisite asymptotic and finite sample guarantees (see, e.g.,~\citep[Section~3]{zhou2018new}),
this yields an alternative form of inequality~\eqref{eqn:meandeviation} that requires milder assumptions on the distribution of the errors~$\varepsilon$.
For another alternative, notice that the first term on the r.h.s.\ of inequality~\eqref{eqn:meandeviation_int2} can also be bounded from above as
\begin{align*}
&\frac{1}{n} \sum_{i=1}^{n} \bigl\lVert \bigl(\hQ_n(x)\bigl[\hQ_n(x^i)\bigr]^{-1} - Q^*(x)\bigl[Q^*(x^i)\bigr]^{-1}\bigr) Q^*(x^i) \varepsilon^i \bigr\rVert \\
=&\frac{1}{n} \sum_{i=1}^{n} \bigl\lVert \bigl(\hQ_n(x)\bigl[\hQ_n(x^i)\bigr]^{-1} Q^*(x^i) - Q^*(x)\bigr) \varepsilon^i \bigr\rVert \\
=&\frac{1}{n} \sum_{i=1}^{n} \bigl\lVert \hQ_n(x)\bigl(\bigl[\hQ_n(x^i)\bigr]^{-1}Q^*(x^i) - I\bigr) \varepsilon^i + (\hQ_n(x) - Q^*(x)) \varepsilon^i \bigr\rVert \\
\leq& \norm{\hQ_n(x)}\Bigl(\uset{\bar{x} \in \X}{\sup} \norm{\bigl[\hQ_n(\bar{x})\bigr]^{-1}}\Bigr) \biggl(\frac{1}{n} \sum_{i=1}^{n} \norm{Q^*(x^i) - \hQ_n(x^i)}^2\biggr)^{1/2} \biggl(\frac{1}{n} \sum_{i=1}^{n} \norm{\varepsilon^i}^2\biggr)^{1/2} + \norm{\hQ_n(x) - Q^*(x)}\biggl(\frac{1}{n} \sum_{i=1}^{n} \norm{\varepsilon^i}\biggr),
\end{align*}
where the final step follows by the Cauchy-Schwarz inequality.
Additionally, the second term on the r.h.s.\ of inequality~\eqref{eqn:meandeviation_int2} can also be bounded from above as
\begin{align*}
\frac{1}{n} \sum_{i=1}^{n} \bigl\lVert\hQ_n(x) \bigl[\hQ_n(x^i)\bigr]^{-1} (f^*(x^i) - \hf_n(x^i)) \bigr\rVert &\leq \norm{\hQ_n(x)} \Bigl(\uset{\bar{x} \in \X}{\sup} \norm{\bigl[\hQ_n(\bar{x})\bigr]^{-1}}\Bigr) \biggl(\frac{1}{n} \sum_{i=1}^{n} \norm{f^*(x^i) - \hf_n(x^i)}\biggr).
\end{align*}
Using these bounds in inequality~\eqref{eqn:meandeviation_int2}, we conclude that the mean deviation $\frac{1}{n} \sum_{i=1}^{n} \norm{\teps^i_n(x)}$ can also be bounded from above as
\begin{align}
\label{eqn:meandeviation_alt}
\frac{1}{n} \sum_{i=1}^{n} \norm{\teps^i_n(x)} &\leq \norm{\hf_n(x) - f^*(x)} + \norm{\hQ_n(x) - Q^*(x)}\biggl(\frac{1}{n} \sum_{i=1}^{n} \norm{\varepsilon^i}\biggr) + \nonumber \\
& \quad \norm{\hQ_n(x)}\Bigl(\uset{\bar{x} \in \X}{\sup} \norm{\bigl[\hQ_n(\bar{x})\bigr]^{-1}}\Bigr) \biggl(\frac{1}{n} \sum_{i=1}^{n} \norm{Q^*(x^i) - \hQ_n(x^i)}^2\biggr)^{1/2} \biggl(\frac{1}{n} \sum_{i=1}^{n} \norm{\varepsilon^i}^2\biggr)^{1/2} + \nonumber\\ 
& \quad \norm{\hQ_n(x)} \Bigl(\uset{\bar{x} \in \X}{\sup} \norm{\bigl[\hQ_n(\bar{x})\bigr]^{-1}}\Bigr) \biggl(\frac{1}{n} \sum_{i=1}^{n} \norm{f^*(x^i) - \hf_n(x^i)}\biggr).
\end{align}
Inequality~\eqref{eqn:meandeviation_alt} can be used to derive alternative conditions under which our asymptotic and finite sample guarantees hold.
For instance, asymptotic and finite sample guarantees on the uniform convergence of the estimate~$\hQ_n$ to~$Q^*$ on~$\X$ directly translate to the requisite asymptotic and finite sample guarantees on the quantities involving the estimate~$\hQ_n$ in~\eqref{eqn:meandeviation_alt}.
These conditions again necessitate milder assumptions on the distribution of the errors~$\varepsilon$ relative to~\eqref{eqn:meandeviation}; however, they require the function~$Q^*$ and its regression estimate~$\hQ_n$ to be (asymptotically) a.s.\ uniformly invertible (cf.\ \cite{robinson1987asymptotically}), i.e., $\sup_{\bar{x} \in \X} \norm{[Q^*(\bar{x})]^{-1}} < +\infty$ and a.s.\ (for $n$ large enough) $\sup_{\bar{x} \in \X} \norm{\bigl[\hQ_n(\bar{x})\bigr]^{-1}} < +\infty$.
We omit these details for brevity and continue with inequality~\eqref{eqn:meandeviation} for the rest of our analysis.

\subsection{Consistency}

We begin with assumptions that guarantee that the mean deviation term $\frac{1}{n} \sum_{i=1}^{n} \norm{\teps^i_n(x)}$ converges to zero in probability.

\begin{assumption}
\label{ass:varweaklln}
The function~$Q^*$ and the data~$\D_n$ satisfy the weak LLNs 
\[
\frac{1}{n} \sum_{i=1}^{n} \norm{Q^*(x^i)}^4 \convinprob \mathbb{E}[\norm{Q^*(X)}^4] \quad \text{and} \quad \frac{1}{n} \sum_{i=1}^{n} \bigl\lVert \bigl[Q^*(x^i)\bigr]^{-1}\bigr\rVert^2 \convinprob \mathbb{E}\bigl[ \bigl\lVert \bigl[Q^*(X)\bigr]^{-1} \bigr\rVert^2 \bigr].
\]
\end{assumption}

\begin{assumption}
\label{ass:errorsweaklln}
The samples~$\{\varepsilon^i\}_{i=1}^{n}$ satisfy the weak LLN $\frac{1}{n} \sum_{i=1}^{n} \norm{\varepsilon^i}^4 \convinprob \mathbb{E}[\norm{\varepsilon}^4]$.
\end{assumption}

Assumptions~\ref{ass:varweaklln} and~\ref{ass:errorsweaklln} are mild weak LLN assumptions that hold, for instance, when the samples $\{(x^i,\varepsilon^i)\}$ are i.i.d.\ and the quantities $\mathbb{E}[\norm{Q^*(X)}^4]$, $\mathbb{E}\bigl[ \bigl\lVert \bigl[Q^*(X)\bigr]^{-1} \bigr\rVert^2 \bigr]$, and $\mathbb{E}[\norm{\varepsilon}^4]$ are finite.
They also hold for non-i.i.d.\ data arising from mixing/stationary processes that satisfy suitable assumptions (see the discussion following Assumption~3 of~\cite{kannan2020data}).
We also require the following consistency assumption on the regression estimates $\hf_n$ and $\hQ_n$ (cf.\ Assumption~4 of~\cite{kannan2020data}).

\begin{assumption}
\label{ass:regconsist}
The regression estimates $\hf_n$ and $\hQ_n$ possess the following consistency properties:
\begin{align*}
&\hf_n(x) \convinprob f^*(x) \quad \text{and} \quad \hQ_n(x) \convinprob Q^*(x), \quad \text{for a.e. } x \in \X, \quad \text{and} \\
&\frac{1}{n} \sum_{i=1}^{n} \norm{\hf_n(x^i) - f^*(x^i)}^2 \convinprob 0, \quad \frac{1}{n} \sum_{i=1}^{n} \bigl\lVert \bigl[\hQ_n(x^i)\bigr]^{-1} - \bigl[Q^*(x^i)\bigr]^{-1}\bigr\rVert^2 \convinprob 0.
\end{align*}
\end{assumption}

The following result will prove useful in our analysis.

\begin{lemma}
\label{lem:varbound}
We have
\[
\biggl(\frac{1}{n} \sum_{i=1}^{n} \bigl\lVert \bigl[\hQ_n(x^i)\bigr]^{-1}\bigr\rVert^2\biggr)^{1/2} \leq \biggl(\frac{1}{n} \sum_{i=1}^{n} \bigl\lVert \bigl[\hQ_n(x^i)\bigr]^{-1} - \bigl[Q^*(x^i)\bigr]^{-1}\bigr\rVert^2\biggr)^{1/2} + \biggl(\frac{1}{n} \sum_{i=1}^{n} \bigl\lVert \bigl[Q^*(x^i)\bigr]^{-1}\bigr\rVert^2\biggr)^{1/2}.
\]
\end{lemma}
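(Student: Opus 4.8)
The plan is to recognize the claimed inequality as nothing more than the triangle inequality applied twice: once pointwise for the operator $\ell_2$-norm, and once for the Euclidean norm on $\R^n$ (Minkowski's inequality). No auxiliary results from the excerpt are needed.

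First I would introduce, for each $i \in [n]$, the nonnegative shorthands $a_i := \norm{[\hQ_n(x^i)]^{-1} - [Q^*(x^i)]^{-1}}$ and $b_i := \norm{[Q^*(x^i)]^{-1}}$. Writing $[\hQ_n(x^i)]^{-1} = \bigl([\hQ_n(x^i)]^{-1} - [Q^*(x^i)]^{-1}\bigr) + [Q^*(x^i)]^{-1}$ and applying the triangle inequality for the operator norm gives the pointwise bound $\norm{[\hQ_n(x^i)]^{-1}} \le a_i + b_i$ for every $i$.

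Next, since $0 \le \norm{[\hQ_n(x^i)]^{-1}} \le a_i + b_i$ and the map $t \mapsto t^2$ is nondecreasing on $[0,\infty)$, I would square and sum over $i$ to obtain $\sum_{i=1}^n \norm{[\hQ_n(x^i)]^{-1}}^2 \le \sum_{i=1}^n (a_i+b_i)^2$. Taking square roots and applying Minkowski's inequality (the triangle inequality for the $\ell^2$-norm on $\R^n$) to the vectors $(a_1,\dots,a_n)$ and $(b_1,\dots,b_n)$ yields $\bigl(\sum_{i=1}^n \norm{[\hQ_n(x^i)]^{-1}}^2\bigr)^{1/2} \le \bigl(\sum_{i=1}^n a_i^2\bigr)^{1/2} + \bigl(\sum_{i=1}^n b_i^2\bigr)^{1/2}$. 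Dividing both sides by $\sqrt{n}$ and substituting back the definitions of $a_i$ and $b_i$ gives exactly the stated inequality.

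There is no substantive obstacle here; the only point worth flagging is that the step passing from the pointwise bound to the bound on sums of squares relies on the monotonicity of $t \mapsto t^2$ on the nonnegative reals, which is why it matters that $\norm{[\hQ_n(x^i)]^{-1}}$, $a_i$, and $b_i$ are all nonnegative.
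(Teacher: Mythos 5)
Your proposal is correct and follows essentially the same route as the paper's proof: a pointwise triangle inequality for the operator norm, followed by the triangle (Minkowski) inequality for the $\ell_2$-norm applied to the resulting nonnegative component-wise bound. The only difference is that you make explicit the monotonicity-of-squaring step that the paper leaves implicit.
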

\begin{proof}
The triangle inequality for the operator norm implies 
\[
\bigl\lVert \bigl[\hQ_n(x^i)\bigr]^{-1}\bigr\rVert \leq \bigl\lVert \bigl[\hQ_n(x^i)\bigr]^{-1} - \bigl[Q^*(x^i)\bigr]^{-1}\bigr\rVert + \bigl\lVert \bigl[Q^*(x^i)\bigr]^{-1}\bigr\rVert, \quad \forall i \in [n].
\]
Therefore, the following component-wise inequality holds:
\[
0 \leq \begin{pmatrix} \bigl\lVert \bigl[\hQ_n(x^1)\bigr]^{-1}\bigr\rVert \\ \vdots \\ \bigl\lVert \bigl[\hQ_n(x^n)\bigr]^{-1}\bigr\rVert \end{pmatrix} \leq \begin{pmatrix} \bigl\lVert \bigl[\hQ_n(x^1)\bigr]^{-1} - \bigl[Q^*(x^1)\bigr]^{-1}\bigr\rVert \\ \vdots \\ \bigl\lVert \bigl[\hQ_n(x^n)\bigr]^{-1} - \bigl[Q^*(x^n)\bigr]^{-1}\bigr\rVert \end{pmatrix} + \begin{pmatrix} \bigl\lVert \bigl[Q^*(x^1)\bigr]^{-1}\bigr\rVert \\ \vdots \\ \bigl\lVert \bigl[Q^*(x^n)\bigr]^{-1}\bigr\rVert \end{pmatrix}.
\]
The stated result then follows as a consequence of the triangle inequality for the $\ell_2$-norm.
\end{proof}

Applying Assumptions~\ref{ass:varweaklln},~\ref{ass:errorsweaklln}, and~\ref{ass:regconsist} to inequality~\eqref{eqn:meandeviation} immediately yields the following result.

\begin{theorem}
Suppose Assumptions~\ref{ass:varweaklln},~\ref{ass:errorsweaklln}, and~\ref{ass:regconsist} hold.
Then $\frac{1}{n} \sum_{i=1}^{n} \norm{\teps^i_n(x)} \convinprob 0$ for a.e.\ $x \in \X$.
\end{theorem}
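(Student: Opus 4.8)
The plan is to start from the explicit bound~\eqref{eqn:meandeviation} on the mean deviation term and show that each of the four summands on its right-hand side is $o_p(1)$ for a.e.\ $x \in \X$; since $\frac1n\sum_{i=1}^n\norm{\teps^i_n(x)} \ge 0$, a squeeze argument then delivers the claim. Every estimate needed is a routine consequence of the hypotheses together with standard facts---the power-mean (Jensen) inequality, Lemma~\ref{lem:varbound}, the continuous mapping theorem, and the fact that the product of an $o_p(1)$ sequence with an $O_p(1)$ sequence is $o_p(1)$---so I do not anticipate a genuine obstacle; the only care required is (i) to reduce the averages of $\norm{\varepsilon^i}$ and of $\norm{[\hQ_n(x^i)]^{-1}}^2$ to quantities that the weak LLN hypotheses actually control, rather than invoking stronger moment conditions, and (ii) to absorb the finitely many $P_X$-null exceptional sets arising from the various ``a.e.\ $x$'' statements into a single null set.

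Concretely, I would bound the four terms of~\eqref{eqn:meandeviation} as follows. For the first term, $\norm{\hf_n(x) - f^*(x)} \convinprob 0$ is immediate from Assumption~\ref{ass:regconsist}. For the second term, $\norm{\hQ_n(x) - Q^*(x)} \convinprob 0$ by Assumption~\ref{ass:regconsist}, while the power-mean inequality gives $\frac1n\sum_{i=1}^n\norm{\varepsilon^i} \le \bigl(\frac1n\sum_{i=1}^n\norm{\varepsilon^i}^4\bigr)^{1/4}$, whose right-hand side converges in probability to $\bigl(\expect{\norm{\varepsilon}^4}\bigr)^{1/4} < \infty$ by Assumption~\ref{ass:errorsweaklln}; hence the second term is an $o_p(1)$ factor times an $O_p(1)$ factor. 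For the third term, $\norm{\hQ_n(x)} \convinprob \norm{Q^*(x)} < \infty$ by Assumption~\ref{ass:regconsist} and the continuous mapping theorem, $\bigl(\frac1n\sum_{i=1}^n\norm{[\hQ_n(x^i)]^{-1} - [Q^*(x^i)]^{-1}}^2\bigr)^{1/2} \convinprob 0$ by Assumption~\ref{ass:regconsist}, and $\bigl(\frac1n\sum_{i=1}^n\norm{Q^*(x^i)}^4\bigr)^{1/4}$ and $\bigl(\frac1n\sum_{i=1}^n\norm{\varepsilon^i}^4\bigr)^{1/4}$ converge in probability to finite constants by Assumptions~\ref{ass:varweaklln} and~\ref{ass:errorsweaklln}; so the third term is $o_p(1)$. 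For the fourth term, $\norm{\hQ_n(x)} = O_p(1)$ as before, Lemma~\ref{lem:varbound} together with Assumptions~\ref{ass:varweaklln} and~\ref{ass:regconsist} shows $\bigl(\frac1n\sum_{i=1}^n\norm{[\hQ_n(x^i)]^{-1}}^2\bigr)^{1/2} = O_p(1)$, and $\bigl(\frac1n\sum_{i=1}^n\norm{f^*(x^i) - \hf_n(x^i)}^2\bigr)^{1/2} \convinprob 0$ by Assumption~\ref{ass:regconsist}; so the fourth term is again $o_p(1)$.

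Summing these four bounds and using that a finite sum of $o_p(1)$ sequences is $o_p(1)$, the right-hand side of~\eqref{eqn:meandeviation} converges to zero in probability, and therefore so does $\frac1n\sum_{i=1}^n\norm{\teps^i_n(x)}$, for every $x$ outside the union of the exceptional null sets, i.e.\ for a.e.\ $x \in \X$. The step I would flag as the crux---though it is more bookkeeping than difficulty---is the correct identification of the moment-average factors as $O_p(1)$: this uses the weak LLN hypotheses (Assumptions~\ref{ass:varweaklln} and~\ref{ass:errorsweaklln}) rather than mere finiteness for each fixed $n$, and it relies on Lemma~\ref{lem:varbound} and the power-mean inequality to keep the required moment conditions as weak as those actually assumed.
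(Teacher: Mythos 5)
Your proposal is correct and follows essentially the same route as the paper: bound each of the four terms in inequality~\eqref{eqn:meandeviation} via Assumptions~\ref{ass:varweaklln},~\ref{ass:errorsweaklln}, and~\ref{ass:regconsist}, Lemma~\ref{lem:varbound}, the continuous mapping theorem, and the $o_p$/$O_p$ calculus. Your explicit use of the power-mean inequality to control $\frac1n\sum_{i=1}^n\norm{\varepsilon^i}$ from the fourth-moment LLN is exactly the detail the paper's one-line proof leaves implicit.
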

\begin{proof}
Follows from inequality~\eqref{eqn:meandeviation}, Assumptions~\ref{ass:varweaklln},~\ref{ass:errorsweaklln}, and~\ref{ass:regconsist}, Lemma~\ref{lem:varbound}, the continuous mapping theorem, and the fact that $O_p(1) O_p(1) = O_p(1)$, $O_p(1) o_p(1) = o_p(1)$, and $o_p(1) + o_p(1) = o_p(1)$.
\end{proof}

\subsection{Rates of convergence}

We refine Assumption~\ref{ass:regconsist} to obtain rates of convergence (cf.\ Assumption~6 of~\cite{kannan2020data}).

\begin{assumption}
\label{ass:regconvrate}
There is a constant\footnote{The constant $r$ is independent of~$n$, but could depend on the covariate dimension~$d_x$.} $0 < r \leq 1$ such that the regression estimates~$\hf_n$ and~$\hQ_n$ satisfy the following convergence rate criteria:
\begin{align*}
&\norm{\hf_n(x) - f^*(x)} = O_p(n^{-r/2}) \quad \text{and} \quad \norm{\hQ_n(x) - Q^*(x)} = O_p(n^{-r/2}), \quad \text{for a.e. } x \in \X, \\
&\frac{1}{n} \sum_{i=1}^{n} \norm{\hf_n(x^i) - f^*(x^i)}^2 = O_p(n^{-r}), \quad \frac{1}{n} \sum_{i=1}^{n} \bigl\lVert \bigl[\hQ_n(x^i)\bigr]^{-1} - \bigl[Q^*(x^i)\bigr]^{-1}\bigr\rVert^2 = O_p(n^{-r}).
\end{align*}
\end{assumption}

Inequality~\eqref{eqn:meandeviation} along with Assumptions~\ref{ass:varweaklln},~\ref{ass:errorsweaklln}, and~\ref{ass:regconvrate} readily yields the following result.

\begin{theorem}
Suppose Assumptions~\ref{ass:varweaklln},~\ref{ass:errorsweaklln}, and~\ref{ass:regconvrate} hold.
Then $\frac{1}{n} \sum_{i=1}^{n} \norm{\teps^i_n(x)} = O_p(n^{-r/2})$ for a.e.\ $x \in \X$.
\end{theorem}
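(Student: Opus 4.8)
The plan is to propagate the rate assumptions through the master bound~\eqref{eqn:meandeviation}, controlling each of its four summands separately, mirroring the structure of the consistency proof but now tracking powers of~$n$. First I would record the auxiliary $O_p(1)$ bounds that do not involve the rate~$r$: by Assumption~\ref{ass:errorsweaklln} and Jensen's inequality, $\frac{1}{n}\sum_{i=1}^{n}\norm{\varepsilon^i} \leq \bigl(\frac{1}{n}\sum_{i=1}^{n}\norm{\varepsilon^i}^4\bigr)^{1/4} = O_p(1)$, and the continuous mapping theorem applied to the fourth root of the weak LLN limit also gives $\bigl(\frac{1}{n}\sum_{i=1}^{n}\norm{\varepsilon^i}^4\bigr)^{1/4} = O_p(1)$; by Assumption~\ref{ass:varweaklln}, similarly $\bigl(\frac{1}{n}\sum_{i=1}^{n}\norm{Q^*(x^i)}^4\bigr)^{1/4} = O_p(1)$ and $\bigl(\frac{1}{n}\sum_{i=1}^{n}\norm{[Q^*(x^i)]^{-1}}^2\bigr)^{1/2} = O_p(1)$; and since Assumption~\ref{ass:regconvrate} yields $\hQ_n(x)\convinprob Q^*(x)$ for a.e.\ $x$ with $\norm{Q^*(x)}<\infty$, we obtain $\norm{\hQ_n(x)} = O_p(1)$ for a.e.\ $x\in\X$.

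Next I would bound the four terms on the r.h.s.\ of~\eqref{eqn:meandeviation}. The first term is $O_p(n^{-r/2})$ directly by Assumption~\ref{ass:regconvrate}. The second term equals $\norm{\hQ_n(x)-Q^*(x)}\cdot\frac{1}{n}\sum_{i=1}^{n}\norm{\varepsilon^i} = O_p(n^{-r/2})\cdot O_p(1) = O_p(n^{-r/2})$. The third term is a product of $\norm{\hQ_n(x)}=O_p(1)$, the factor $\bigl(\frac{1}{n}\sum_{i=1}^{n}\norm{[\hQ_n(x^i)]^{-1}-[Q^*(x^i)]^{-1}}^2\bigr)^{1/2} = O_p(n^{-r/2})$ from Assumption~\ref{ass:regconvrate}, and the two $O_p(1)$ moment factors recorded above, hence it is $O_p(n^{-r/2})$. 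For the fourth term, I would first apply Lemma~\ref{lem:varbound} to write $\bigl(\frac{1}{n}\sum_{i=1}^{n}\norm{[\hQ_n(x^i)]^{-1}}^2\bigr)^{1/2}$ as the sum of an $O_p(n^{-r/2}) = o_p(1)$ piece (again Assumption~\ref{ass:regconvrate}) and an $O_p(1)$ piece (Assumption~\ref{ass:varweaklln}), so this factor is $O_p(1)$; multiplying by $\norm{\hQ_n(x)}=O_p(1)$ and by $\bigl(\frac{1}{n}\sum_{i=1}^{n}\norm{f^*(x^i)-\hf_n(x^i)}^2\bigr)^{1/2} = O_p(n^{-r/2})$ from Assumption~\ref{ass:regconvrate} gives $O_p(n^{-r/2})$. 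Summing the four bounds and using that a finite sum of $O_p(n^{-r/2})$ terms is $O_p(n^{-r/2})$ establishes the claim.

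The argument is essentially mechanical once~\eqref{eqn:meandeviation} is in hand, and I do not anticipate a genuine obstacle. The only points that warrant care are: (i) the fourth term, where Assumption~\ref{ass:regconvrate} supplies a rate only for $\frac{1}{n}\sum_{i=1}^{n}\norm{[\hQ_n(x^i)]^{-1}-[Q^*(x^i)]^{-1}}^2$ and not for $\frac{1}{n}\sum_{i=1}^{n}\norm{[\hQ_n(x^i)]^{-1}}^2$ itself, which is precisely why Lemma~\ref{lem:varbound} is invoked; and (ii) the Jensen step converting the fourth-moment control of the errors into the first-moment $O_p(1)$ bound needed for the second term. The identical template, with $\bigl(\frac{1}{n}\sum_{i=1}^{n}\norm{\teps^i_n(x)}^2\bigr)^{1/2}$ in place of the mean deviation, would handle the root-mean-square version mentioned at the end of Section~2.
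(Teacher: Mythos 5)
Your proof is correct and follows exactly the route the paper takes: the paper's own proof is a one-line appeal to inequality~\eqref{eqn:meandeviation}, Assumptions~\ref{ass:varweaklln},~\ref{ass:errorsweaklln}, and~\ref{ass:regconvrate}, Lemma~\ref{lem:varbound}, the continuous mapping theorem, and the $O_p$ calculus rules, which is precisely the term-by-term bookkeeping you spell out. Your two flagged points of care (invoking Lemma~\ref{lem:varbound} for the fourth term and the Jensen step for $\frac{1}{n}\sum_{i=1}^{n}\norm{\varepsilon^i}$) are exactly the non-trivial ingredients the paper's terse proof relies on.
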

\begin{proof}
Follows by applying Assumptions~\ref{ass:varweaklln},~\ref{ass:errorsweaklln}, and~\ref{ass:regconvrate}, Lemma~\ref{lem:varbound}, the continuous mapping theorem, and the fact that $O_p(1) +  O_p(n^{-r/2}) = O_p(1)$, $O_p(1) O_p(n^{-r/2}) = O_p(n^{-r/2})$, and $O_p(n^{-r/2}) + O_p(n^{-r/2}) = O_p(n^{-r/2})$ to inequality~\eqref{eqn:meandeviation}.
\end{proof}

\subsection{Finite sample guarantees}

We make the following additional assumptions to establish a finite sample guarantee (cf.\ Assumption~8 of~\cite{kannan2020data}).

\begin{assumption}
\label{ass:reglargedev}
The regression estimates $\hf_n$ and $\hQ_n$ possess the following finite sample properties: for any constant $\kappa > 0$, there exist positive constants $K_f(\kappa,x)$, $\bar{K}_f(\kappa)$, $\beta_f(\kappa,x)$, $\bar{\beta}_f(\kappa)$, $K_Q(\kappa,x)$, $\bar{K}_Q(\kappa)$, $\beta_Q(\kappa,x)$, and $\bar{\beta}_Q(\kappa)$ such that for each $n \in \mathbb{N}$
\begin{align*}
\mathbb{P}\bigl\{\norm{f^*(x) - \hf_n(x)} > \kappa \bigr\} &\leq K_f(\kappa,x) \exp\left(-n\beta_f(\kappa,x)\right), \quad\: \text{for a.e. } x \in \X, \\
\mathbb{P}\bigl\{\norm{Q^*(x) - \hQ_n(x)} > \kappa \bigr\} &\leq K_Q(\kappa,x) \exp\left(-n\beta_Q(\kappa,x)\right), \quad \text{for a.e. } x \in \X, \\
\mathbb{P}\biggl\{\dfrac{1}{n} \displaystyle\sum_{i=1}^{n} \norm{f^*(x^i) - \hf_n(x^i)}^2 > \kappa^2 \biggr\} &\leq \bar{K}_f(\kappa) \exp\left(-n\bar{\beta}_f(\kappa)\right), \quad \text{and} \\
\mathbb{P}\biggl\{\frac{1}{n} \sum_{i=1}^{n} \bigl\lVert \bigl[\hQ_n(x^i)\bigr]^{-1} - \bigl[Q^*(x^i)\bigr]^{-1}\bigr\rVert^2 > \kappa^2 \biggr\} &\leq \bar{K}_Q(\kappa) \exp\left(-n\bar{\beta}_Q(\kappa)\right).
\end{align*}
\end{assumption}

The next two assumptions strengthen Assumptions~\ref{ass:varweaklln} and~\ref{ass:errorsweaklln} to assume finite sample properties for the quantities involved.

\begin{assumption}
\label{ass:varlargedev}
For any constant $\kappa > 0$, there exist positive constants $\gamma_{Q}(\kappa)$ and $\bar{\gamma}_{Q}(\kappa)$ such that for each $n \in \mathbb{N}$
\begin{align*}
\mathbb{P}\biggl\{ \biggl(\frac{1}{n}\sum_{i=1}^{n} \bigl\lVert \bigl[Q^*(x^i)\bigr]^{-1}\bigr\rVert^2\biggr)^{1/2} > \Bigl(\expect{\bigl\lVert \bigl[Q^*(X)\bigr]^{-1} \bigr\rVert^2}\Bigr)^{1/2} + \kappa \biggr\} &\leq \exp(-n \gamma_{Q}(\kappa)), \\
\mathbb{P}\biggl\{ \biggl(\frac{1}{n} \sum_{i=1}^{n} \norm{Q^*(x^i)}^4\biggr)^{1/4} > \bigl(\expect{\norm{Q^*(X)}^4}\bigr)^{1/4} + \kappa \biggr\} &\leq \exp(-n \bar{\gamma}_{Q}(\kappa)).
\end{align*}
\end{assumption}

\begin{assumption}
\label{ass:errorlargedev}
For any constant $\kappa > 0$, there exist positive constants $\gamma_{\varepsilon}(\kappa)$ and $\bar{\gamma}_{\varepsilon}(\kappa)$ such that for each $n \in \mathbb{N}$
\[
\mathbb{P}\biggl\{ \frac{1}{n}\sum_{i=1}^{n} \norm{\varepsilon^i} > \expect{\norm{\varepsilon}} + \kappa \biggr\} \leq \exp(-n \gamma_{\varepsilon}(\kappa)), \quad \mathbb{P}\biggl\{ \biggl(\frac{1}{n}\sum_{i=1}^{n} \norm{\varepsilon^i}^4\biggr)^{1/4} > (\expect{\norm{\varepsilon}^4})^{1/4} + \kappa \biggr\} \leq \exp(-n \bar{\gamma}_{\varepsilon}(\kappa)).
\]
\end{assumption}

The first part of Assumption~\ref{ass:varlargedev} holds, e.g., if for each $\kappa > 0$, there is a constant $\gamma_Q(\kappa) > 0$ such that
\[
\mathbb{P}\biggl\{ \frac{1}{n}\sum_{i=1}^{n} \bigl\lVert \bigl[Q^*(x^i)\bigr]^{-1}\bigr\rVert^2 > \expect{\bigl\lVert \bigl[Q^*(X)\bigr]^{-1} \bigr\rVert^2} + \kappa^2 \biggr\} \leq \exp(-n \gamma_{Q}(\kappa)).
\]
The function $\gamma_Q(\cdot)$ in the inequality above is related to the so-called rate function in large deviations theory (see Section~7.2.8 of~\cite{shapiro2009lectures}).
Similar conclusions hold for the probability inequalities involving the terms $\frac{1}{n} \sum_{i=1}^{n} \norm{Q^*(x^i)}^4$ and $\frac{1}{n}\sum_{i=1}^{n} \norm{\varepsilon^i}^4$ in Assumptions~\ref{ass:varlargedev} and~\ref{ass:errorlargedev}.
From large deviations theory, we can also conclude that the constants $\gamma_Q(\kappa)$, $\bar{\gamma}_Q(\kappa)$, $\gamma_{\varepsilon}(\kappa)$, and~$\bar{\gamma}_{\varepsilon}(\kappa)$ in Assumptions~\ref{ass:varlargedev} and~\ref{ass:errorlargedev} are guaranteed to exist for i.i.d.\ data~$\D_n$ and for each constant $\kappa > 0$ whenever the following light-tail conditions hold: $\mathbb{E}\bigr[\exp\bigl(\bigl\lVert \bigl[Q^*(X)\bigr]^{-1} \bigr\rVert^p\bigr)\bigr] < +\infty$ for some $p > 2$, $\mathbb{E}[\exp(\norm{Q^*(X)}^p)] < +\infty$ for some $p > 4$, and $\mathbb{E}[\exp(\norm{\varepsilon}^p)] < +\infty$ for some $p > 4$.
The discussion following Assumption~7 of~\cite{kannan2020data} provides avenues for verifying Assumptions~\ref{ass:varlargedev} and~\ref{ass:errorlargedev} for non-i.i.d.\ data~$\D_n$.

We are now ready to state our finite sample guarantee.

\begin{theorem}
Suppose Assumptions~\ref{ass:reglargedev},~\ref{ass:varlargedev}, and~\ref{ass:errorlargedev} hold.
Then, for any constant $\kappa > 0$ and a.e.\ $x \in \X$, there exist positive constants $\tilde{K}(\kappa,x)$ and $\tilde{\beta}(\kappa,x)$ such that
\begin{align*}
\mathbb{P}\biggl\{\dfrac{1}{n} \displaystyle\sum_{i=1}^{n} \norm{\teps^i_{n}(x)} > \kappa \biggr\} &\leq \tilde{K}(\kappa,x) \exp(-n\tilde{\beta}(\kappa,x)).
\end{align*}
\end{theorem}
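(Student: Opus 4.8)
The plan is to push the three large-deviation assumptions through the deterministic bound~\eqref{eqn:meandeviation}. Write its right-hand side as $T_1(x)+T_2(x)+T_3(x)+T_4(x)$, in the order the terms appear there: $T_1(x)=\norm{\hf_n(x)-f^*(x)}$, $T_2(x)=\norm{\hQ_n(x)-Q^*(x)}\bigl(\frac1n\sum_{i=1}^{n}\norm{\varepsilon^i}\bigr)$, $T_3(x)$ the four-factor product, and $T_4(x)$ the three-factor product. Since $\bigl\{\frac1n\sum_{i=1}^{n}\norm{\teps^i_n(x)}>\kappa\bigr\}\subseteq\bigcup_{j=1}^{4}\{T_j(x)>\kappa/4\}$, it is enough to bound each $\prob{T_j(x)>\kappa/4}$ by an expression $K_j(\kappa,x)\exp(-n\beta_j(\kappa,x))$; a union bound together with $\sum_j K_j e^{-n\beta_j}\le\bigl(\sum_j K_j\bigr)e^{-n\min_j\beta_j}$ then yields the claim with $\tilde K(\kappa,x)=\sum_j K_j$ and $\tilde\beta(\kappa,x)=\min_j\beta_j>0$.

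The bound on $\prob{T_1(x)>\kappa/4}$ is the first inequality of Assumption~\ref{ass:reglargedev}. Each of $T_2,T_3,T_4$ is a product of one factor that decays with $n$ and a few factors that stay bounded, so the recurring device is: to force the product below $\kappa/4$ it suffices that every bounded factor lie below a fixed ($n$-independent) constant and the decaying factor lie below $\kappa/4$ divided by the product of those constants; the complementary ``bad'' events each carry an exponential tail by assumption. For $T_2$, with $M_\varepsilon:=\expect{\norm{\varepsilon}}+1$ one has $\{T_2(x)>\kappa/4\}\subseteq\bigl\{\frac1n\sum_i\norm{\varepsilon^i}>M_\varepsilon\bigr\}\cup\bigl\{\norm{\hQ_n(x)-Q^*(x)}>\kappa/(4M_\varepsilon)\bigr\}$, and the two pieces are controlled by Assumptions~\ref{ass:errorlargedev} and~\ref{ass:reglargedev}. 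For $T_3$ and $T_4$ the factor $\norm{\hQ_n(x)}$ is not assumed bounded directly, so I would first use $\norm{\hQ_n(x)}\le\norm{Q^*(x)}+\norm{\hQ_n(x)-Q^*(x)}$ and observe that on $\{\norm{\hQ_n(x)-Q^*(x)}\le1\}$---an event of exponentially high probability by Assumption~\ref{ass:reglargedev}---one has $\norm{\hQ_n(x)}\le\norm{Q^*(x)}+1=:M_{Q,x}$, a finite constant for a.e.\ $x$. Similarly, the factor $\bigl(\frac1n\sum_i\norm{[\hQ_n(x^i)]^{-1}}^2\bigr)^{1/2}$ occurring in $T_4$ is handled by Lemma~\ref{lem:varbound}, which bounds it by $\bigl(\frac1n\sum_i\norm{[\hQ_n(x^i)]^{-1}-[Q^*(x^i)]^{-1}}^2\bigr)^{1/2}+\bigl(\frac1n\sum_i\norm{[Q^*(x^i)]^{-1}}^2\bigr)^{1/2}$; on the high-probability event where the first summand is $\le1$ (Assumption~\ref{ass:reglargedev}) and the second is $\le(\expect{\norm{[Q^*(X)]^{-1}}^2})^{1/2}+1$ (Assumption~\ref{ass:varlargedev}), this factor too lies below a fixed constant. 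The remaining bounded factors $\bigl(\frac1n\sum_i\norm{Q^*(x^i)}^4\bigr)^{1/4}$ and $\bigl(\frac1n\sum_i\norm{\varepsilon^i}^4\bigr)^{1/4}$ sit below $(\expect{\norm{Q^*(X)}^4})^{1/4}+1$ and $(\expect{\norm{\varepsilon}^4})^{1/4}+1$ on high-probability events by Assumptions~\ref{ass:varlargedev} and~\ref{ass:errorlargedev}. With every bounded factor capped by an $n$-independent constant, what remains in $T_3$ is the single decaying factor $\bigl(\frac1n\sum_i\norm{[\hQ_n(x^i)]^{-1}-[Q^*(x^i)]^{-1}}^2\bigr)^{1/2}$ and in $T_4$ the single decaying factor $\bigl(\frac1n\sum_i\norm{f^*(x^i)-\hf_n(x^i)}^2\bigr)^{1/2}$, and each is made $\le\kappa/4$ divided by the relevant product of caps on a high-probability event by Assumption~\ref{ass:reglargedev}.

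Putting this together, each $\{T_j(x)>\kappa/4\}$ is contained in a union of finitely many events, each with probability at most $K\exp(-n\beta)$ for constants supplied by Assumptions~\ref{ass:reglargedev}--\ref{ass:errorlargedev} and by the finite numbers $\norm{Q^*(x)}$, $\expect{\norm{\varepsilon}}$, $\expect{\norm{\varepsilon}^4}$, $\expect{\norm{Q^*(X)}^4}$, $\expect{\norm{[Q^*(X)]^{-1}}^2}$; hence $\prob{T_j(x)>\kappa/4}\le K_j(\kappa,x)\exp(-n\beta_j(\kappa,x))$, and summing over $j$ as above finishes the proof. I expect the work here to be \emph{bookkeeping} rather than any single hard estimate: the one subtlety is that when a ``decaying $\times$ bounded'' product is split, the bounded factors must be capped by constants that do not depend on $n$---otherwise the decaying factor's threshold would shrink and the exponential rate would degrade---which is precisely why the triangle-inequality reduction for $\norm{\hQ_n(x)}$ and the reduction via Lemma~\ref{lem:varbound} for $\bigl(\frac1n\sum_i\norm{[\hQ_n(x^i)]^{-1}}^2\bigr)^{1/2}$ are needed, and why the harmless ``$+1$'' slack in the thresholds drawn from Assumptions~\ref{ass:varlargedev} and~\ref{ass:errorlargedev} is built in.
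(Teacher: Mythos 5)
Your proposal is correct and follows essentially the same route as the paper's proof: decompose the right-hand side of inequality~\eqref{eqn:meandeviation} into the same four terms, apply the union bound with threshold $\kappa/4$ for each, control $\norm{\hQ_n(x)}$ via the triangle inequality against $\norm{Q^*(x)}$, handle $\bigl(\frac{1}{n}\sum_i\bigl\lVert[\hQ_n(x^i)]^{-1}\bigr\rVert^2\bigr)^{1/2}$ via Lemma~\ref{lem:varbound}, and cap each bounded factor by an $n$-independent constant before invoking the relevant tail bound on the single decaying factor. The only difference is cosmetic---you use slack ``$+1$'' where the paper uses ``$+\kappa$'' (and ``$+2\kappa$'') in the caps, which changes the explicit constants $\tilde{K}$ and $\tilde{\beta}$ but not the validity of the argument.
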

\begin{proof}
Using~\eqref{eqn:meandeviation} and the inequality $\mathbb{P}\{V + W > c_1 + c_2\} \leq \mathbb{P}\{V > c_1\} + \mathbb{P}\{W > c_2\}$ for any random variables $V$, $W$ and constants $c_1$, $c_2$, we get
\begin{align}
\label{eqn:meandeviation_finitesamp}
&\mathbb{P}\Bigl\{\dfrac{1}{n} \displaystyle\sum_{i=1}^{n} \norm{\teps^i_{n}(x)} > \kappa \Bigr\} \nonumber\\
\leq& \mathbb{P}\Bigl\{\norm{\hf_n(x) - f^*(x)} > \frac{\kappa}{4}\Bigr\} + \mathbb{P}\biggl\{\biggl(\frac{1}{n} \sum_{i=1}^{n} \norm{\varepsilon^i}\biggr) \norm{\hQ_n(x) - Q^*(x)} > \frac{\kappa}{4}\biggr\} + \nonumber \\
&\quad \mathbb{P}\biggl\{ \norm{\hQ_n(x)} \biggl(\frac{1}{n} \sum_{i=1}^{n} \bigl\lVert \bigl[\hQ_n(x^i)\bigr]^{-1} - \bigl[Q^*(x^i)\bigr]^{-1}\bigr\rVert^2\biggr)^{1/2} \biggl(\frac{1}{n} \sum_{i=1}^{n} \norm{Q^*(x^i)}^4\biggr)^{1/4} \biggl(\frac{1}{n} \sum_{i=1}^{n} \norm{\varepsilon^i}^4\biggr)^{1/4} > \frac{\kappa}{4} \biggr\} + \nonumber \\
&\quad \mathbb{P}\biggl\{ \norm{\hQ_n(x)} \biggl(\frac{1}{n} \sum_{i=1}^{n} \bigl\lVert \bigl[\hQ_n(x^i)\bigr]^{-1}\bigr\rVert^2\biggr)^{1/2} \biggl(\frac{1}{n} \sum_{i=1}^{n} \norm{f^*(x^i) - \hf_n(x^i)}^2\biggr)^{1/2} > \frac{\kappa}{4}\biggr\}.
\end{align}
For a.e.\ $x \in \X$, the first term on the r.h.s.\ of inequality~\eqref{eqn:meandeviation_finitesamp} can be bounded using Assumption~\ref{ass:reglargedev} as
\begin{align*}
&\mathbb{P}\Bigl\{\norm{\hf_n(x) - f^*(x)} > \frac{\kappa}{4}\Bigr\} \leq K_f(\tfrac{\kappa}{4},x) \exp(-n\beta_f(\tfrac{\kappa}{4},x)).
\end{align*}

Next, consider the second term on the r.h.s.\ of inequality~\eqref{eqn:meandeviation_finitesamp}.
We have for a.e.\ $x \in \X$
\begin{align*}
&\mathbb{P}\biggl\{\biggl(\frac{1}{n} \sum_{i=1}^{n} \norm{\varepsilon^i}\biggr) \norm{\hQ_n(x) - Q^*(x)} > \frac{\kappa}{4}\biggr\} \\
\leq& \mathbb{P}\biggl\{\frac{1}{n} \sum_{i=1}^{n} \norm{\varepsilon^i} > \mathbb{E}[\norm{\varepsilon}] + \kappa\biggr\} + \mathbb{P}\Bigl\{(\mathbb{E}[\norm{\varepsilon}] + \kappa) \norm{\hQ_n(x) - Q^*(x)} > \frac{\kappa}{4}\Bigr\} \\
\leq& \exp(-n\gamma_{\varepsilon}(\kappa)) + \mathbb{P}\biggl\{ \norm{\hQ_n(x) - Q^*(x)} > \frac{\kappa}{4(\mathbb{E}[\norm{\varepsilon}] + \kappa)}\biggr\} \\
\leq& \exp(-n\gamma_{\varepsilon}(\kappa)) + K_Q\bigl(\tfrac{\kappa}{4(\mathbb{E}[\norm{\varepsilon}] + \kappa)},x\bigr) \exp\bigl(-n\beta_Q(\tfrac{\kappa}{4(\mathbb{E}[\norm{\varepsilon}] + \kappa)},x)\bigr),
\end{align*}
where the second inequality follows from Assumption~\ref{ass:errorlargedev} and the final step follows from Assumption~\ref{ass:reglargedev}.

The third term on the r.h.s.\ of inequality~\eqref{eqn:meandeviation_finitesamp} can be bounded for a.e.\ $x \in \X$ as
\begin{align*}
&\mathbb{P}\biggl\{ \norm{\hQ_n(x)} \biggl(\frac{1}{n} \sum_{i=1}^{n} \bigl\lVert \bigl[\hQ_n(x^i)\bigr]^{-1} - \bigl[Q^*(x^i)\bigr]^{-1}\bigr\rVert^2\biggr)^{1/2} \biggl(\frac{1}{n} \sum_{i=1}^{n} \norm{Q^*(x^i)}^4\biggr)^{1/4} \biggl(\frac{1}{n} \sum_{i=1}^{n} \norm{\varepsilon^i}^4\biggr)^{1/4} > \frac{\kappa}{4} \biggr\} \\
\leq& \mathbb{P}\bigl\{ \norm{\hQ_n(x)} > \norm{Q^*(x)} + \kappa \bigr\} + \mathbb{P}\biggl\{ \biggl(\frac{1}{n} \sum_{i=1}^{n} \norm{Q^*(x^i)}^4\biggr)^{1/4} > \bigl(\mathbb{E}[\norm{Q^*(X)}^4]\bigr)^{1/4} + \kappa\biggr\} + \\
&\quad \mathbb{P}\biggl\{ \biggl(\frac{1}{n} \sum_{i=1}^{n} \norm{\varepsilon^i}^4\biggr)^{1/4} > \bigl(\mathbb{E}[\norm{\varepsilon}^4]\bigr)^{1/4} + \kappa\biggr\} + \\
&\quad \mathbb{P}\biggl\{ \bigl( \norm{Q^*(x)} + \kappa \bigr) \bigl(\bigl(\mathbb{E}[\norm{Q^*(X)}^4]\bigr)^{1/4} + \kappa\bigr) \bigl(\bigl(\mathbb{E}[\norm{\varepsilon}^4]\bigr)^{1/4} + \kappa\bigr) \biggl(\frac{1}{n} \sum_{i=1}^{n} \bigl\lVert \bigl[\hQ_n(x^i)\bigr]^{-1} - \bigl[Q^*(x^i)\bigr]^{-1}\bigr\rVert^2\biggr)^{1/2} > \frac{\kappa}{4}\biggr\} \\
\leq& K_Q(\kappa,x) \exp\left(-n\beta_Q(\kappa,x)\right) + \exp(-n\bar{\gamma}_{Q}(\kappa)) + \exp(-n\bar{\gamma}_{\varepsilon}(\kappa)) + \\
&\quad \mathbb{P}\biggl\{ \biggl(\frac{1}{n} \sum_{i=1}^{n} \bigl\lVert \bigl[\hQ_n(x^i)\bigr]^{-1} - \bigl[Q^*(x^i)\bigr]^{-1}\bigr\rVert^2\biggr)^{1/2} > h_1(\kappa,x)\biggr\} \\
\leq& K_Q(\kappa,x) \exp\left(-n\beta_Q(\kappa,x)\right) + \exp(-n\bar{\gamma}_{Q}(\kappa)) + \exp(-n\bar{\gamma}_{\varepsilon}(\kappa)) + \bar{K}_Q(h_1(\kappa,x)) \exp(-n\bar{\beta}_Q(h_1(\kappa,x))),
\end{align*}
where the second inequality follows from Assumptions~\ref{ass:reglargedev},~\ref{ass:varlargedev}, and~\ref{ass:errorlargedev}, the final inequality follows from Assumption~\ref{ass:reglargedev}, and
\[
h_1(\kappa,x) := \frac{\kappa}{4\bigl( \norm{Q^*(x)} + \kappa \bigr) \bigl(\bigl(\mathbb{E}[\norm{Q^*(X)}^4]\bigr)^{1/4} + \kappa\bigr) \bigl(\bigl(\mathbb{E}[\norm{\varepsilon}^4]\bigr)^{1/4} + \kappa\bigr)}.
\]

Finally, the fourth term on the r.h.s.\ of inequality~\eqref{eqn:meandeviation_finitesamp} can be bounded for a.e.\ $x \in \X$ as
\begin{align*}
&\mathbb{P}\biggl\{ \norm{\hQ_n(x)} \biggl(\frac{1}{n} \sum_{i=1}^{n} \bigl\lVert \bigl[\hQ_n(x^i)\bigr]^{-1}\bigr\rVert^2\biggr)^{1/2} \biggl(\frac{1}{n} \sum_{i=1}^{n} \norm{f^*(x^i) - \hf_n(x^i)}^2\biggr)^{1/2} > \frac{\kappa}{4}\biggr\} \\
\leq& \mathbb{P}\bigl\{ \norm{\hQ_n(x)} > \norm{Q^*(x)} + \kappa \bigr\} + \mathbb{P}\biggl\{ \biggl(\frac{1}{n} \sum_{i=1}^{n} \bigl\lVert \bigl[\hQ_n(x^i)\bigr]^{-1}\bigr\rVert^2\biggr)^{1/2} > \Bigl(\mathbb{E}\bigl[ \bigl\lVert \bigl[Q^*(X)\bigr]^{-1} \bigr\rVert^2 \bigr]\Bigr)^{1/2} + 2\kappa\biggr\} + \\
&\quad \mathbb{P}\biggl\{ \bigl( \norm{Q^*(x)} + \kappa \bigr) \Bigl(\Bigl(\mathbb{E}\bigl[ \bigl\lVert \bigl[Q^*(X)\bigr]^{-1} \bigr\rVert^2 \bigr]\Bigr)^{1/2} + 2\kappa\Bigr) \biggl(\frac{1}{n} \sum_{i=1}^{n} \norm{f^*(x^i) - \hf_n(x^i)}^2\biggr)^{1/2} > \frac{\kappa}{4}\biggr\} \\
\leq& K_Q(\kappa,x) \exp\left(-n\beta_Q(\kappa,x)\right) + \mathbb{P}\biggl\{ \biggl(\frac{1}{n} \sum_{i=1}^{n} \norm{f^*(x^i) - \hf_n(x^i)}^2\biggr)^{1/2} > h_2(\kappa,x)\biggr\} + \\
&\quad\mathbb{P}\biggl\{ \biggl(\frac{1}{n} \sum_{i=1}^{n} \bigl\lVert \bigl[\hQ_n(x^i)\bigr]^{-1} - \bigl[Q^*(x^i)\bigr]^{-1}\bigr\rVert^2\biggr)^{1/2} + \biggl(\frac{1}{n} \sum_{i=1}^{n} \bigl\lVert \bigl[Q^*(x^i)\bigr]^{-1}\bigr\rVert^2\biggr)^{1/2} > \Bigl(\mathbb{E}\bigl[ \bigl\lVert \bigl[Q^*(X)\bigr]^{-1} \bigr\rVert^2 \bigr]\Bigr)^{1/2} + 2\kappa\biggr\} \\
\leq& K_Q(\kappa,x) \exp\left(-n\beta_Q(\kappa,x)\right) + \bar{K}_f(h_2(\kappa,x)) \exp(-n\bar{\beta}_f(h_2(\kappa,x))) + \bar{K}_Q(\kappa) \exp(-n\bar{\beta}_Q(\kappa)) + \exp(-n\gamma_{Q}(\kappa)),
\end{align*}
where the second inequality follows from Assumption~\ref{ass:reglargedev} and Lemma~\ref{lem:varbound}, the final inequality follows from Assumptions~\ref{ass:reglargedev} and~\ref{ass:varlargedev} and the probability inequality stated at the beginning of this proof, and
\[
h_2(\kappa,x) := \frac{\kappa}{4\bigl( \norm{Q^*(x)} + \kappa \bigr) \Bigl(\Bigl(\mathbb{E}\bigl[ \bigl\lVert \bigl[Q^*(X)\bigr]^{-1} \bigr\rVert^2 \bigr]\Bigr)^{1/2} + 2\kappa\Bigr)}.
\]
Putting the above bounds together in inequality~\eqref{eqn:meandeviation_finitesamp}, we have for a.e.\ $x \in \X$
\begin{align}
\label{eqn:meandeviation_finitesamp2}
\mathbb{P}\biggl\{\dfrac{1}{n} \displaystyle\sum_{i=1}^{n} \norm{\teps^i_{n}(x)} > \kappa \biggr\} &\leq \exp(-n\gamma_{\varepsilon}(\kappa)) + \exp(-n\bar{\gamma}_{\varepsilon}(\kappa)) + \exp(-n\gamma_{Q}(\kappa)) + \exp(-n\bar{\gamma}_{Q}(\kappa)) + \\
&\quad K_f(\tfrac{\kappa}{4},x) \exp(-n\beta_f(\tfrac{\kappa}{4},x)) + \bar{K}_f(h_2(\kappa,x)) \exp(-n\bar{\beta}_f(h_2(\kappa,x))) + \nonumber \\
&\quad K_Q\bigl(\tfrac{\kappa}{4(\mathbb{E}[\norm{\varepsilon}] + \kappa)},x\bigr) \exp\bigl(-n\beta_Q(\tfrac{\kappa}{4(\mathbb{E}[\norm{\varepsilon}] + \kappa)},x)\bigr) + 2K_Q(\kappa,x) \exp\left(-n\beta_Q(\kappa,x)\right) + \nonumber\\
&\quad \bar{K}_Q(\kappa) \exp(-n\bar{\beta}_Q(\kappa)) + \bar{K}_Q(h_1(\kappa,x)) \exp(-n\bar{\beta}_Q(h_1(\kappa,x))) \nonumber,
\end{align}
which then implies the desired result.
\end{proof}

\vspace*{0.05in}

Suppose we make the mild assumptions that the functions $\bar{K}_f(\cdot)$, $K_Q(\cdot,x)$, and $\bar{K}_Q(\cdot)$ in Assumption~\ref{ass:reglargedev} are monotonically nonincreasing on $\mathbb{R}_+$ and the functions $\bar{\beta}_f(\cdot)$, $\beta_Q(\cdot,x)$, and $\bar{\beta}_Q(\cdot)$ therein are monotonically nondecreasing on $\mathbb{R}_+$ (cf.\ Appendix~EC.3 of~\cite{kannan2020data}).
For a.e.\ $x \in \X$ and tolerance $\kappa$ satisfying
\[
\kappa < \min\Bigl\{\mathbb{E}[\norm{\varepsilon}], \norm{Q^*(x)}, \bigl(\mathbb{E}[\norm{Q^*(X)}^4]\bigr)^{1/4}, \bigl(\mathbb{E}\bigl[ \bigl\lVert\bigl[Q^*(X)\bigr]^{-1} \bigr\rVert^2 \bigr]\bigr)^{1/2}\Bigr\},
\]
we can use inequality~\eqref{eqn:meandeviation_finitesamp2} to derive the bound
\begin{align*}
\mathbb{P}\biggl\{\dfrac{1}{n} \displaystyle\sum_{i=1}^{n} \norm{\teps^i_{n}(x)} > \kappa \biggr\} &\leq \exp(-n\gamma_{\varepsilon}(\kappa)) + \exp(-n\bar{\gamma}_{\varepsilon}(\kappa)) + \exp(-n\gamma_{Q}(\kappa)) + \exp(-n\bar{\gamma}_{Q}(\kappa)) + \\
&\quad K_f(\tfrac{\kappa}{4},x) \exp(-n\beta_f(\tfrac{\kappa}{4},x)) + \bar{K}_f(\bar{h}_2(\kappa,x)) \exp(-n\bar{\beta}_f(\bar{h}_2(\kappa,x))) + \nonumber \\
&\quad K_Q\bigl(\tfrac{\kappa}{8\mathbb{E}[\norm{\varepsilon}]},x\bigr) \exp\bigl(-n\beta_Q(\tfrac{\kappa}{8\mathbb{E}[\norm{\varepsilon}]},x)\bigr) + 2K_Q(\kappa,x) \exp\left(-n\beta_Q(\kappa,x)\right) + \nonumber\\
&\quad \bar{K}_Q(\kappa) \exp(-n\bar{\beta}_Q(\kappa)) + \bar{K}_Q(\bar{h}_1(\kappa,x)) \exp(-n\bar{\beta}_Q(\bar{h}_1(\kappa,x))), \nonumber
\end{align*}
where
\[
\bar{h}_1(\kappa,x) := \frac{\kappa}{32\norm{Q^*(x)} \bigl(\mathbb{E}[\norm{Q^*(X)}^4]\bigr)^{1/4} \bigl(\mathbb{E}[\norm{\varepsilon}^4]\bigr)^{1/4}}, \quad \bar{h}_2(\kappa,x) := \frac{\kappa}{24\norm{Q^*(x)}\Bigl(\mathbb{E}\bigl[ \bigl\lVert \bigl[Q^*(X)\bigr]^{-1} \bigr\rVert^2 \bigr]\Bigr)^{1/2}}.
\]
Therefore, for a.e.\ $x \in \X$ and $\kappa < \min\bigl\{\mathbb{E}[\norm{\varepsilon}], \norm{Q^*(x)}, \bigl(\mathbb{E}[\norm{Q^*(X)}^4]\bigr)^{1/4}, \bigl(\mathbb{E}\bigl[ \bigl\lVert\bigl[Q^*(X)\bigr]^{-1} \bigr\rVert^2 \bigr]\bigr)^{1/2}\bigr\}$
\[
\mathbb{P}\biggl\{\dfrac{1}{n} \displaystyle\sum_{i=1}^{n} \norm{\teps^i_{n}(x)} > \kappa \biggr\} \leq \tilde{K}(\kappa,x) \exp(-n\tilde{\beta}(\kappa,x)),
\]
with $\tilde{K}(\kappa,x) := 4 + K_f(\tfrac{\kappa}{4},x) + \bar{K}_f(\bar{h}_2(\kappa,x)) + K_Q\bigl(\tfrac{\kappa}{8\mathbb{E}[\norm{\varepsilon}]},x\bigr) + 2K_Q(\kappa,x) + \bar{K}_Q(\kappa) + \bar{K}_Q(\bar{h}_1(\kappa,x))$ and
\begin{align*}
\tilde{\beta}(\kappa,x) &:= \min\Bigr\{ \gamma_{\varepsilon}(\kappa), \bar{\gamma}_{\varepsilon}(\kappa), \gamma_{Q}(\kappa), \bar{\gamma}_{Q}(\kappa), \beta_f(\tfrac{\kappa}{4},x), \bar{\beta}_f(\bar{h}_2(\kappa,x)), \beta_Q\bigl(\tfrac{\kappa}{8\mathbb{E}[\norm{\varepsilon}]},x\bigr), \beta_Q(\kappa,x), \bar{\beta}_Q(\kappa), \bar{\beta}_Q(\bar{h}_1(\kappa,x)) \Bigr\}.
\end{align*}
Unlike the functions $h_1(\cdot,x)$ and $h_2(\cdot,x)$, the functions $\bar{h}_1(\cdot,x)$ and $\bar{h}_2(\cdot,x)$ are linear.
Consequently, for small-enough tolerances~$\kappa > 0$ and a given risk level $\alpha \in (0,1)$, the above simplification enables an easier interpretation of the sample size~$n$ required for $\mathbb{P}\bigl\{\frac{1}{n} \sum_{i=1}^{n} \norm{\teps^i_n(x)} > \kappa\bigr\} \leq \alpha$.
This can in turn enable a more interpretable estimate of the sample size $n$ required for solutions of the ER-SAA problem~\eqref{eqn:app} and the ER-DRO problem~\eqref{eqn:dro} to be approximately optimal to the true problem~\eqref{eqn:speq} with probability $1-\alpha$ (cf.\ Proposition~2 of~\cite{kannan2020data}).

\section{Some regression setups that satisfy our assumptions}
\label{sec:regr}

In this section, we verify that Assumptions~\ref{ass:regconsist},~\ref{ass:regconvrate}, and~\ref{ass:reglargedev} hold for some regression setups.
We do not attempt to be exhaustive.
We first discuss methods for estimating the regression function~$f^*$ and note their asymptotic and finite sample guarantees.
We then list some popular models for the class of functions~$\Q$, discuss approaches for estimating the matrix-valued function~$Q^*$, and note their theoretical guarantees.

\subsection{Estimating the regression function}
\label{subsec:estf*}

We identify conditions under which the parts of Assumptions~\ref{ass:regconsist},~\ref{ass:regconvrate}, and~\ref{ass:reglargedev} involving the regression estimate~$\hf_n$ hold for some prediction setups. 
Although these assumptions on~$\hf_n$ are the same as those in Assumptions~4,~6, and~8 of~\cite{kannan2020data}, we focus on regression setups that work in the heteroscedastic setting.

\paragraph{Ordinary least squares (OLS) regression.}
When the regression function~$f^*$ is linear, its OLS estimate $\hf_n$ satisfies Assumptions~\ref{ass:regconsist} and~\ref{ass:regconvrate} with constant $r = 1$ (see Proposition~EC.3.\ of~\cite{kannan2020data} for details).
Furthermore, Theorem~11 and Remark~12 of~\cite{hsu2012random} can be used to readily identify conditions under which the estimates~$\hf_n$ possess a finite sample guarantee like in Assumption~\ref{ass:reglargedev}.
However, OLS regression does not yield an efficient estimator\footnote{Here, by the term {\it efficient estimator}, we mean a minimum variance unbiased estimator.} of~$f^*$ in the heteroscedastic case~\cite{romano2017resurrecting}.
An alternative to OLS regression is feasible weighted least squares (FWLS) regression~\cite{romano2017resurrecting,robinson1987asymptotically}, which results in asymptotically efficient estimates when the estimate~$\hQ_n$ of~$Q^*$ is consistent.
These asymptotic results of FWLS regression continue to hold at the expense of asymptotic efficiency even if the estimate~$\hQ_n$ of~$Q^*$ may be inconsistent (see, e.g., Section~3.3 of~\cite{romano2017resurrecting}).

\paragraph{Sparse regression methods.}
Proposition~EC.4.\ of~\cite{kannan2020data} lists conditions under which the ordinary Lasso regression estimate~$\hf_n$ satisfies Assumptions~\ref{ass:regconsist} and~\ref{ass:regconvrate} with constant $r = 1$ and a finite sample guarantee like in Assumption~\ref{ass:reglargedev}.
Theorem~1 of~\citet{belloni2012sparse} outlines conditions under which similar asymptotic results hold for the heteroscedasticity-adapted Lasso.
\citet{medeiros2016l1} and~\citet{ziel2016iteratively} present asymptotic analyses of the adaptive Lasso for time series data.
Their analyses applies to GARCH-type processes.
Theorems~2 and~3 of~\cite{medeiros2016l1} and Theorem~1 of~\cite{ziel2016iteratively} present conditions under which the estimate~$\hf_n$ satisfies Assumptions~\ref{ass:regconsist} and~\ref{ass:regconvrate} with $r = 1$.
\citet{belloni2014pivotal} present asymptotic and finite sample guarantees for the heteroscedasticity-adapted square-root Lasso.
Finally,~\citet{dalalyan2013learning} introduce a scaled heteroscedastic Dantzig selector.
Theorem~5.2 therein presents large deviation bounds for both regression estimates~$\hf_n$ and~$\hQ_n$ under certain sparsity assumptions\footnote{Although~\cite{dalalyan2013learning} consider the fixed design setting, their analysis can be modified to accommodate random designs under suitable assumptions on the distribution~$P_X$ of the covariates~$X$ (see Section~4 of~\cite{dalalyan2013learning}).}.

\paragraph{Other M-estimators.}
The conclusions for OLS regression carry over to more general M-estimators.
In particular, Appendix~EC.2 of~\cite{kannan2020data} presents conditions under which Assumptions~\ref{ass:regconsist} and~\ref{ass:regconvrate} continue to hold with $r = 1$.
Similar to the special case of OLS regression, vanilla M-estimators may no longer be efficient---feasible weighted M-estimation is an asymptotically efficient alternative.
Theorems~1,~3, and~5 of~\citet{sun2020adaptive} and Theorem~2.1 of~\citet{zhou2018new} present large deviation results of the form Assumption~\ref{ass:reglargedev} for adaptive Huber regression when the function~$f^*$ is linear.
Remarkably, their results hold even for heavy-tailed error distributions.
Finally,~\citet{schick1996weighted} considers a semiparametric regression setup for~$f^*$ and establishes rates of convergence of weighted least squares estimates.

\paragraph{kNN regression.}
Proposition~EC.5.\ of~\cite{kannan2020data} summarizes conditions under which the kNN regression estimate~$\hf_n$ of~$f^*$ satisfies Assumptions~\ref{ass:regconsist} and~\ref{ass:regconvrate} with constant $r = O(1)/d_x$.
It also notes conditions under which~$\hf_n$ possesses a finite sample guarantee like in Assumption~\ref{ass:reglargedev} (cf.\ Corollary~1 of~\cite{jiang2019non}).

\paragraph{Kernel regression.}
\citet{hansen2008uniform} studies conditions under which kernel regression estimates are uniformly consistent given dependent data~$\D_n$ satisfying mixing conditions.
Theorems~1,~2, and~4 therein can be used to show that the kernel regression estimate~$\hf_n$ satisfies Assumptions~\ref{ass:regconsist} and~\ref{ass:regconvrate} with constant $r = O(1)/d_x$.
\citet{mokkadem2008large} study large deviations results for some kernel regression estimates.

\subsection{Estimating the conditional covariance matrix of the errors}

In this section, we identify conditions under which the parts of Assumptions~\ref{ass:regconsist},~\ref{ass:regconvrate}, and~\ref{ass:reglargedev} involving the regression estimate~$\hQ_n$ hold for some prediction setups. 
These assumptions for $\hQ_n$---in particular, Assumption~\ref{ass:reglargedev}---are not as well-studied in the literature as those for~$\hf_n$. Therefore, they are typically harder to verify than their counterparts in Section~\ref{subsec:estf*}.
Because deriving theoretical properties of estimators for the heteroscedastic setting and deriving finite sample properties of estimators in general are areas of topical interest, we envision that future research will enable easier verification of these assumptions.

For simplicity, we only consider function classes~$\Q$ that comprise diagonal covariance matrices (cf.\ \cite{zhou2018new}), although the theoretical developments in Section~\ref{sec:bounds} apply more generally.
Bauwens et al.\ \cite{bauwens2006multivariate} review some model classes~$\Q$ with non-diagonal covariance matrices that are popular in time series modeling.

\begin{example}[Parametric Models]
\label{exm:paramcov}
The model class is 
\[
\Q = \{Q : \R^{d_x} \to \R^{d_y \times d_y} : Q(X) = \text{diag}(q_1(X), q_2(X), \dots, q_{d_y}(X))\},
\]
where $q_j : \R^{d_x} \to \R_+$ for each $j \in [d_y]$.
Forms of the functions $q_j$ of interest include~\cite{powell2010models,romano2017resurrecting}:
\begin{enumerate}[label=\roman*.]

\item $(q_j(X))^2 = \sigma^2_j (1+ \tr{\theta}_j X)^2$ for parameters $(\sigma_j,\theta_j)$, 

\item $(q_j(X))^2 = \exp(\sigma_j + \tr{\theta}_j X)$ for parameters $(\sigma_j,\theta_j)$,

\item $(q_j(X))^2 = \exp\bigl(\sigma_j + \tr{\theta}_j \log(X)\bigr)$ for parameters $(\sigma_j,\theta_j)$. 
\end{enumerate}
For the rest of the note, we absorb the parameter~$\sigma_j$ into~$\theta_j$ for simplicity of presentation. With this notation, the above examples can be cast in the general form $(q_j(X))^2 = h_j(\tr{\theta}_j g_j(X))$ for known functions $h_j$ and $g_j$ and a parameter~$\theta_j$ that is to be estimated.
The above setup can also accommodate cases where the parameters of the function~$Q^*$ include some of the parameters of the function~$f^*$. 
\end{example}

\begin{example}[Nonparametric Model]
\label{exm:nonparamcov}
The model class is 
\[
\Q = \{Q : \R^{d_x} \to \R^{d_y \times d_y} : Q(X) = \text{diag}(q_1(X), q_2(X), \dots, q_{d_y}(X))\},
\]
where each $q_j : \R^{d_x} \to \R_+$ is assumed to be     `sufficiently smooth'.
Chapter~8 of~\citet{fan2008nonlinear} presents some popular models for the functions~$q_j$ in a time series context.
\end{example}

Suppose for ease of exposition that the covariance matrix of the errors~$\varepsilon$ is the identity matrix.
Then, for each $j \in [d_y]$ and any $\bar{x} \in \X$, we have $\expect{(Y_j - f^*_j(X))^2 \mid X = \bar{x}} = (q^*_j(\bar{x}))^2$ for the components $q^*_j(\bar{x})$ of $Q^*(\bar{x})$ in Examples~\ref{exm:paramcov} and~\ref{exm:nonparamcov}.
This motivates the estimation of each function~$q^*_j$ by regressing the squared residuals $(y^i_j - \hf_{j,n}(x^i))^2$ on the covariate observation~$x^i$.
For the parametric setup in Example~\ref{exm:paramcov}, this nonlinear regression problem can often be transformed into a linear regression problem.
An alternative for the parametric regression setup is to estimate the parameters~$\theta_j$ in~$\hQ_n$ concurrently with the parameters of the estimate~$\hf_n$ using an M-estimation procedure.
Section~3 of~\citet{davidian1987variance} outlines several approaches for estimating the parameters in Example~\ref{exm:paramcov}, including the methods mentioned above.
Chapter~8 of~\citet{fan2008nonlinear} discusses nonparametric regression methods for estimating each function~$q_j$.

We now outline approaches for verifying that the estimate~$\hQ_n$ satisfies Assumptions~\ref{ass:regconsist},~\ref{ass:regconvrate}, and~\ref{ass:reglargedev}.
Consider first the parametric setup in Example~\ref{exm:paramcov}.
Suppose the function $Q^*(\cdot) \equiv Q(\cdot;\theta^*)$ for some function~$Q$ and the goal is to estimate the parameter $\theta^*$.
Let~$\hth_n$ denote the estimate of~$\sth$ corresponding to the regression estimate~$\hQ_n$, i.e., $\hQ_n(\cdot) \equiv Q(\cdot;\hth_n)$.
Suppose for a.e.\ realization~$x \in \X$, the function~$Q(x;\cdot)$ is Lipschitz continuous with Lipschitz constant~$L_Q(x)$ and its inverse~$[Q(x;\cdot)]^{-1}$ is also Lipschitz continuous with Lipschitz constant~$\bar{L}_Q(x)$.
These assumptions hold for the model classes in Example~\ref{exm:paramcov} if the parameters~$\theta$ therein are restricted to lie in suitable compact sets\footnote{As noted in~\cite[Appendix~EC.3.2.]{kannan2020data}, it suffices to assume that the above Lipschitz continuity assumptions hold locally for the asymptotic results.}.
Because
\[
\norm{\hQ_n(x) - Q^*(x)} \leq L_Q(x)\norm{\hth_n - \sth}\  \text{ and } \ \  \frac{1}{n} \sum_{i=1}^{n} \bigl\lVert \bigl[\hQ_n(x^i)\bigr]^{-1} - \bigl[Q^*(x^i)\bigr]^{-1}\bigr\rVert^2 \leq \Bigl(\frac{1}{n} \sum_{i=1}^{n}\bar{L}^2_Q(x^i)\Bigr) \norm{\hth_n - \sth}^2,
\]
asymptotic and finite sample guarantees on the estimator~$\hth_n$ of~$\sth$ directly translate to the asymptotic and finite sample guarantees on the estimate~$\hQ_n$ in Assumptions~\ref{ass:regconsist},~\ref{ass:regconvrate}, and~\ref{ass:reglargedev}.
When the functions~$f^*$ and~$Q^*$ are jointly estimated using M-estimators, the results listed in Appendix~EC.3.2.\ of~\cite{kannan2020data} provide conditions under which the estimator~$\hth_n$ of~$\sth$ is consistent and Assumptions~\ref{ass:regconsist} and~\ref{ass:regconvrate} hold with $r = 1$.
They also present a hard-to-verify uniform exponential bound condition under which~$\hth_n$ possesses a finite sample guarantee.
\citet{carroll1982robust} consider robust M-estimators for~$\sth$ that possess a similar rate of convergence when~$f^*$ is linear.
\citet{dalalyan2013learning} present asymptotic and finite sample guarantees for a scaled Dantzig estimator of~$\sth$ under some sparsity assumptions.
Finally,~\citet{fan2014quasi} present a quasi-maximum likelihood approach for estimating the parameters of GARCH models and investigate their asymptotic properties.

Next, consider the nonparametric setup in Example~\ref{exm:nonparamcov}, and suppose the function~$Q^*$ and its regression estimate~$\hQ_n$ are (asymptotically) a.s.\ uniformly invertible\footnote{Although inequality~\eqref{eqn:meandeviation_alt} in Section~\ref{sec:bounds} can yield similar guarantees under such uniform invertibility assumptions, we stick with Assumptions~\ref{ass:regconsist},~\ref{ass:regconvrate}, and~\ref{ass:reglargedev} dictated by inequality~\eqref{eqn:meandeviation} for simplicity.}.
We have
\begin{align*}
&\frac{1}{n} \sum_{i=1}^{n} \bigl\lVert \bigl[\hQ_n(x^i)\bigr]^{-1} - \bigl[Q^*(x^i)\bigr]^{-1}\bigr\rVert^2 \\
\leq& \frac{1}{n} \sum_{i=1}^{n} \bigl\lVert \bigl[Q^*(x^i)\bigr]^{-1}\bigr\rVert^2 \bigl\lVert \bigl[\hQ_n(x^i)\bigr]^{-1}\bigr\rVert^2 \norm{\hQ_n(x^i) - Q^*(x^i)}^2 \\
\leq& \biggl(\sup_{\bar{x} \in \X} \bigl\lVert \bigl[Q^*(\bar{x})\bigr]^{-1}\bigr\rVert^2\biggr) \biggl(\sup_{\bar{x} \in \X} \bigl\lVert \bigl[\hQ_n(\bar{x})\bigr]^{-1}\bigr\rVert^2\biggr) \biggl( \frac{1}{n} \sum_{i=1}^{n} \norm{\hQ_n(x^i) - Q^*(x^i)}^2 \biggr)
\end{align*}
Therefore, asymptotic and finite sample guarantees for $\norm{\hQ_n(x) - Q^*(x)}$ and $\frac{1}{n} \sum_{i=1}^{n} \norm{\hQ_n(x^i) - Q^*(x^i)}^2$ are sufficient for verifying Assumptions~\ref{ass:regconsist},~\ref{ass:regconvrate}, and~\ref{ass:reglargedev}. 
Theorem~8.5 of~\citet{fan2008nonlinear} can be used to identify conditions under which these asymptotic guarantees hold for local linear estimators on time series data when the dimension of the covariates~$d_x = 1$.
They also note approaches for estimating~$Q^*$ when $d_x > 1$.
Theorem~2 of~\citet{ruppert1997local} can be used to verify Assumptions~\ref{ass:regconsist} and~\ref{ass:regconvrate} for local polynomial smoothers.
Proposition~2.1 and Theorem~3.1 of~\citet{jin2015adaptive} identify conditions under which Assumptions~\ref{ass:regconsist} and~\ref{ass:regconvrate} hold for a local likelihood estimator.
\citet{van2010semiparametric} consider semiparametric models for both~$f^*$ and~$Q^*$.
Theorems~3.1 and~3.2 therein can be used to verify Assumptions~\ref{ass:regconsist} and~\ref{ass:regconvrate} for the estimates~$\hQ_n$. Section~3 of~\citet{zhou2018new} presents robust estimators of~$Q^*$ when~$f^*$ is linear and notes that these estimators~$\hQ_n$ possess asymptotic and finite sample guarantees in the form of Assumptions~\ref{ass:regconsist},~\ref{ass:regconvrate}, and~\ref{ass:reglargedev}.
Finally, Theorem~3.1 of~\citet{chesneau2020nonparametric} can be used to derive asymptotic guarantees for wavelet estimators of~$Q^*$.

\section{Conclusion}

In this note, we propose generalizations of the ER-SAA and ER-DRO frameworks in~\cite{kannan2020data,kannan2020residuals} that can handle heteroscedastic errors, focusing mainly on ER-SAA for brevity.
We identify sufficient conditions 
under which solutions to these approximations possess asymptotic and finite sample guarantees for a class of two-stage stochastic MIPs with continuous recourse.
Furthermore, we outline conditions under which these assumptions hold for some regression setups, including OLS, Lasso, and kNN regression.

Future work includes verification of the large deviation Assumption~\ref{ass:reglargedev} for the regression estimate~$\hQ_n$ for additional prediction setups, consideration of more general relationships between the random vector~$Y$ and the random covariates~$X$, and investigation of the computational performance of the generalizations of the ER-SAA and ER-DRO problems on a practical application involving heteroscedasticity.

\section*{Acknowledgments}

We thank Prof.\ Erick Delage for encouraging us to investigate extensions of the ER-SAA formulation to the heteroscedastic setting.
This research is supported by the Department of Energy, Office of Science, Office of Advanced Scientific Computing Research, Applied Mathematics program under Contract Number DE-AC02-06CH11357.

{
\footnotesize
\section*{References}
\begingroup
\renewcommand{\section}[2]{}%
\bibliographystyle{abbrvnat}
\setlength{\bibsep}{2.7pt}
\bibliography{main}
\endgroup
}

\end{document}